\newtheorem{theorem}{Theorem}[section]
\newtheorem{lemma}[theorem]{Lemma}
\newtheorem{proposition}[theorem]{Proposition}
\Crefname{conjecture}{Conjecture}{Conjectures}
\theoremstyle{remark}
\theoremstyle{definition}
\theoremstyle{theorem}
\newtheorem{remark}[theorem]{Remark}
\theoremstyle{plain}
\theoremstyle{plain}
\newcommand{\N}{\mathbb{N}}
\newcommand{\Z}{\mathbb{Z}}
\newcommand{\Q}{\mathbb{Q}}
\newcommand{\R}{\mathbb{R}}
\newcommand{\C}{\mathbb{C}}
\newcommand{\J}{\mathbb{J}}
\renewcommand{\S}{\mathbb{S}}
\newcommand{\F}{\mathcal{F}}
\newcommand{\bbH}{\mathbb{H}}
\newcommand{\calL}{\mathcal{L}}
\newcommand{\calM}{\mathcal{M}}
\newcommand{\calG}{\mathcal{G}}
\newcommand{\cusp}{\operatorname{cusp}}
\newcommand{\GL}{\operatorname{GL}}
\newcommand{\SL}{\operatorname{SL}}
\newcommand{\PSL}{\operatorname{PSL}}
\renewcommand{\H}{\mathbb{H}}
\renewcommand{\Im}{\operatorname{Im}}
\newcommand{\sm}{\setminus}
\newcommand{\leg}[2]{\left(\frac{#1}{#2}\right)}
\newcommand{\calR}{\mathcal{R}}
\newcommand{\pabcd}{\left(\begin{smallmatrix} a & b \\ c & d \end{smallmatrix}\right)}
\newcommand{\pmat}[1]{\left(\begin{smallmatrix}#1\end{smallmatrix}\right)}
\newcommand{\ol}[1]{\overline{#1}}
\newcommand{\calF}{\mathcal{F}}
\newcommand{\z}{\mathfrak{z}}
\newcommand{\bbS}{\mathbb{S}}
\newcommand{\GZ}{\mathbb{J}}
\newcommand{\GZp}[3]{\mathbb{J}_{#1,#2,0}^{#3}}
\renewcommand{\a}{\alpha}
\renewcommand{\b}{\beta}
\newcommand{\g}{\gamma}
\renewcommand{\k}{\kappa}
\renewcommand{\l}{\lambda}
\newcommand{\w}{\omega}
\newcommand{\p}{\varrho}
\newcommand{\s}{\sigma}
\renewcommand{\t}{\tau}
\newcommand{\vf}{\varphi}
\newcommand{\De}{\Delta}
\newcommand{\Ga}{\Gamma}
\newcommand{\Om}{\Omega}
\newcommand{\y}{\mathbbm{y}}
\newcommand{\im}{\operatorname{Im}}
\newcommand{\re}{\operatorname{Re}}
\renewcommand{\pmod}[1]{\ \left( \mathrm{mod} \, #1 \right)}
\newcommand{\Pmod}[1]{\ \, ( \mathrm{mod} \, #1 )}
\newif \ifdetails 
\newcommand{\cny}[2]{c\left(#1;#2\right)}
\newcommand{\pd}[2]{\frac{\partial^{#2}}{\partial #1^{#2}}}
\newif\ifdiscs 
\numberwithin{equation}{section}
\author{Kathrin Bringmann}
\address{University of Cologne, Department of Mathematics and Computer Science, Weyertal 86-90, 50931 Cologne, Germany}
\email{kbringma@math.uni-koeln.de}
\author{Ben Kane}
\address{Department of Mathematics, University of Hong Kong, Pokfulam, Hong Kong}
\email{bkane@hku.hk}
\author{Michael H. Mertens}
\address{University of Cologne, Department of Mathematics and Computer Science, Weyertal 86-90, 50931 Cologne, Germany}
\email{mmertens@math.uni-koeln.de}
\title{Hecke eigenforms for meromorphic cusp forms}
\date{\today}
\subjclass[2020]{11F11,11F25,11F37}
\keywords{harmonic Maass forms, Hecke eigenforms, lifting maps for meromorphic modular forms, meromorphic cusp forms}
\begin{document}

\begin{abstract}
	In this paper, we construct Hecke eigenforms for two families of quotient spaces of meromorphic cusp forms on $\SL_2(\Z)$. We show that each quotient space in the first (resp. second family) is isomorphic as a Hecke module to the space $S_{2k}$ (resp. $M_{2k}$) of cusp forms (resp. holomorphic modular forms) of the same weight on $\SL_2(\Z)$.
\end{abstract}
\maketitle

\section{Introduction and statement of results}

Hecke theory for holomorphic modular forms is one of the most fruitful tools for (holomorphic) modular forms. It is therefore natural to seek some kind of Hecke structure also for more general spaces of meromorphic modular forms. For the space $M_{2-2k}^!$ of {\it weakly holomorphic modular forms}, i.e., such meromorphic modular forms whose poles, if any, are supported at cusps, this has been achieved by Guerzhoy in \cite{Guerzhoy}. The main difference to the holomorphic theory is that Hecke operators increase the pole order at $i\infty$, so there are no truly weakly holomorphic eigenforms in the strict sense. Guerzhoy \cite{Guerzhoy} instead factored out by a suitable Hecke-stable subspace and defined eigenforms for the quotient space. This subspace is distinguished in a number of ways. For a prime $p$ and $n\in\N$ with $p\mid n$, the $n$-th Fourier coefficients of its elements are divisible by high powers of $p$, yielding congruences for coefficients of weakly holomorphic modular forms \cite{GKO}. Moreover, elements of this space basically lead to vanishing period polynomials \cite{BGKO}. 

As an example, consider the weight $6$ weakly holomorphic modular form
\[
	f_{6,i\infty}(z) := \frac{E_6(z)^3}{\De(z)} + 1488E_6(z) = q^{-1} - 73764q - 86241280q^2 + O\left(q^3\right).
\]
Here and throughout, we let $q:=e^{2\pi iz}$ and define the weight $2k$ {\it Eisenstein series}
\[
	E_{2k}(z):=1-\frac{4k}{B_{2k}} \sum_{n=1}^{\infty} \sigma_{2k-1}(n)q^n,
\]
with $\s_r(n):=\sum_{d\mid n}d^r$ and $B_{k}$ the $k$-th Bernoulli number. Moreover $\De(z):=q\prod_{n=1}^\infty(1-q^n)^{24}=:\sum_{n=1}^\infty\t(n)q^n$ is the unique normalized Hecke eigenform of weight $12$ on $\SL_2(\Z)$. Since the constant term in the Fourier expansion of $f_{6,i\infty}$ vanishes, $f_{6,i\infty}$ belongs to the subspace $S_6^!\subseteq M_6^!$ of {\it weakly holomorphic cusp forms}, i.e., those weakly holomorphic modular forms whose constant term vanishes. Setting $D:=\frac{1}{2\pi i}\pd{z}{}$ and using the fact that $D^{2k-1}$ sends $M_{2-2k}^!$ to $S_{2k}^!$ (see \cite[Theorem 1.2]{BOR}), one verifies that
\[
	D^{5}\left(\frac{E_8(z)}{\Delta(z)}\right) = -f_{6,i\infty}(z).
\]
Since
\[
	\frac{E_8(z)}{\Delta(z)}=q^{-1}+504+73764q + 2695040 q^2+ O\left(q^3\right),
\]
we see that $\frac{E_8}{\De}\notin S_{-4}^!$, so $f_{6,i\infty}\notin D^5(S_{-4}^!)$ because any solution to the differential equation $D^5(F)=-f_{6,i\infty}$ differs from $\frac{E_8}{\De}$ by a polynomial in $z$ of degree at most $5$, and thus $\frac{E_8}{\De}$ is the only modular solution. However, it turns out that for $m\in\N$ we have
\begin{equation}\label{eqn:inftyeigenvalue}
	f_{6,i\infty}\big|T_m -\sigma_5(m) f_{6,i\infty}\in D^{5}\left(S_{-4}^!\right),
\end{equation}
where $T_m$ denotes the $m$-th Hecke operator defined in \eqref{eqn:Tmdef}. 
Denoting 
\begin{equation}\label{eqn:junkinfty}
	\mathbb{J}_{2k}^{i\infty}:=D^{2k-1}\left(M_{2-2k}^!\right)\qquad\text{ and }\qquad \mathbb{J}_{2k,0}^{i\infty}:=D^{2k-1}\left(S_{2-2k}^!\right),
\end{equation}
we see that $f_{6,i\infty}$ has eigenvalue $\sigma_5(m)$ under the Hecke operator $T_m$ in $S_6^!/\mathbb{J}_{6,0}^{i\infty}$. These eigenvalues match those of $E_6$, demonstrating the fact that 
\[
	\left(S_{2k}^!\cap S_{2k}^{\perp}\right)\Big/\mathbb{J}_{2k}^{i\infty}\cong S_{2k}\qquad\text{ and }\qquad \left(S_{2k}^!\cap S_{2k}^{\perp}\right)\Big/\mathbb{J}_{2k,0}^{i\infty}\cong M_{2k}
\]
as Hecke modules (see \cite[Theorem 1.2]{BGKO}).  
Here $S_{2k}^\perp$ denotes the subspace of forms which are orthogonal to $S_{2k}$ under the (suitably regularized) Petersson inner product. In particular, if $k=3$, the quotient space on the left-hand side of the second isomorphism is spanned by $f_{6,i\infty}$, while $M_6$ is spanned by $E_6$ (note that $S_6=\{0\}$, so $S_6^!\cap S_6^\perp=S_6^!$).

Motivated by \eqref{eqn:inftyeigenvalue} and applications such as the Eichler--Shimura theory \cite{BGKO} and congruence properties \cite{GKO}, we construct appropriate subspaces generalizing $\J_{2k}^{i\infty}$ and $\J_{2k,0}^\infty$ to meromorphic modular forms. {\it Meromorphic cusp forms}, originally studied by Petersson \cite{Pe1}, have also been a flourishing subject in the recent past (see e.g. \cite{Bengoechea,BerndtBialekYee,Bialek,BKRamanujanAll}). These forms may have poles in the upper half-plane $\H$ but decay exponentially towards $i\infty$. One of the main problems in finding a notion of Hecke eigenform in this context is that Hecke operators move the pole, which makes it more complicated to find a good space to factor out by. We hence decompose the space of meromorphic cusp forms into certain natural Hecke-stable subspaces and then consider the action of the Hecke operators on these subspaces. Note that for a function $f:\H\to\C$, the space
\[
	\bbS_{2k}(f):=\left\langle f|_{2k}T_m\: :\: m\in\N \right\rangle
\]
is the smallest Hecke-stable subspace containing $f$. We investigate such subspaces for certain natural choices of $f$. Specifically, for fixed $\z\in\H$ and $k,\ell\in\N$ with $k\ge2$, we choose for $f$ the weight $2k$ elliptic Poincar\'e series $\Psi_{2k,-\ell}^\z$, defined in \eqref{eqPsi}. These Poincar\'e series are characterized by their growth at $\z$ and span the subspace of meromorphic cusp forms orthogonal to holomorphic cusp forms by \cite[Satz 7--8]{Pe2} (see also \Cref{lem:PsiGenerate}). We abbreviate
\begin{equation*}
	\bbS_{2k,-\ell}^\z:=\bbS_{2k}\left(\Psi_{2k,-\ell}^{\z}\right)=\left\langle \Psi_{2k,-\ell}^{\z}|_{2k}T_m\: :\: m\in\N \right\rangle.
\end{equation*}
Our generalization of $\J_{2k}^{i\infty}$ and $\J_{2k,0}^{i\infty}$ to subspaces of $\bbS_{2k,-\ell}^{\z}$ is motivated by an algebraicity conjecture of Gross and Zagier \cite[Conjecture (4.4)]{GrossZagier}, which has recently been proven by Li \cite[Theorem 1.1]{Li} and Bruinier--Li--Yang \cite{BLY}. To describe this, suppose that $(\l_m)_{m\ge1}$ is a sequence for which $\l_m\ne0$ for only finitely many $m$. Suppose $\lambda_m$ are chosen such that for any weight $2k$ cusp form $f$ with Fourier expansion $f(z)=\sum_{m=1}^\infty c_f(m)q^m$ 
\begin{equation}\label{eqn:GrossZagierCondition}
	\sum_{m=1}^{\infty} \lambda_m c_f(m)=0.
\end{equation}
Gross and Zagier used such sequences to build linear combinations of the automorphic Green's function\footnote{The automorphic Green's function, defined in \eqref{eqn:Gsdef}, is a non-holomorphic modular form of weight zero which is an eigenfunction under the Laplace operator (see \eqref{eqn:Laplacedef}).} $G_s(z,\z)$ evaluated at different points. They then conjectured that linear combinations constructed from such sequences have certain algebraic properties. For example, a special case of the algebraicity conjecture claims that if $\z$ is a CM-point of fundamental discriminant $D$ and $(\l_m)_{m\ge1}$ satisfies \eqref{eqn:GrossZagierCondition}, then
\begin{equation}\label{eqn:GZconjecture}
	\sum_{m=1}^{\infty} \lambda_m G_k(\z,\z)\big|_0 T_m= u^2D^{1-k}\log|\beta|,
\end{equation}
where $\b$ is an algebraic number lying in the class field of $\Q(\sqrt D)$ and $u\in\N$ is half of the number of roots of unity in $\Q(\sqrt D)$ (see \cite[Conjecture 4.4 on p. 317]{GrossZagier}). 

If $g$ is modular of weight $2\ell-2k$ on $\SL_2(\Z)$, then we define\footnote{Note that the sum does not converge everywhere; see Lemma \ref{lem:Ggcontinue} for details about the meromorphic continuation in $z$.}
\begin{equation}\label{eqn:Ggdef}
	\mathcal{G}_{g}(z,\z)=\mathcal{G}_{2k,-\ell,g}(z,\z):=\sum_{m=1}^{\infty}m^{2k-\ell} g(\z)\big|_{2\ell-2k} T_m q^m. 
\end{equation}
We omit the dependence on $k$ and $\ell$ if it is clear from the context. Fix $k,\ell\in\N$ with $k\ge2$ and $\z\in\H$. Condition \eqref{eqn:GrossZagierCondition} naturally leads one to consider the spaces\footnote{For a detailed explanation of the connection between condition \eqref{eqn:GrossZagierCondition} and \eqref{eqGZ} we refer to \Cref{rem:connection}.}
\begin{align}\label{eqGZ} 
	\GZ_{2k,-\ell}^\z &:= \left\{\mathcal{G}_{g}(z,\z) : g\in R_{2-2k}^{\ell-1}\left(M_{2-2k}^!\right)\right\},\\
	\label{eqGZ2}
	\GZp{2k}{-\ell}{\z} &:= \left\{\mathcal{G}_{g}(z,\z) : g\in R_{2-2k}^{\ell-1}\left(S_{2-2k}^!\right)\right\}.
\end{align}
Here, writing $z=x+iy$ throughout
\[
	R_{2-2k}^{\ell-1}:=R_{2\ell-2k-2}\circ \cdots \circ R_{2-2k}\quad\text{ with }\quad R_{2\kappa}:=2i\frac{\partial}{\partial z}+\frac{2\kappa}{y}
\]
denotes the {\it Maass raising operator} repeated $\ell-1$ times. It raises the weight of a modular object from $2-2k$ to $2\ell-2k$. For $\ell=2k$, Bol's identity (see \cite[Lemma 2.1]{BOR}) yields 
\begin{equation}\label{eqn:BolsIdentity}
	R_{2-2k}^{2k-1}=-(4\pi)^{2k-1} D^{2k-1},
\end{equation}
so these spaces emulate the spaces defined in \eqref{eqn:junkinfty}. We prove in \Cref{thmIso} that $\GZ_{2k,-\ell}^\z$ and $\GZp{2k}{-\ell}{\z}$ are subspaces of $\bbS_{2k,-\ell}^{\z}$. One can thus view the spaces in \eqref{eqGZ} and \eqref{eqGZ2} as images under the map $f\mapsto\calG_{R^{\ell-1}(f)}(z,\z)$ from weakly holomorphic modular forms of weight $2-2k$ to meromorphic cusp forms of weight $2k$. The existence of a weakly holomorphic modular form with a prescribed growth towards $i\infty$ is governed by the Fourier coefficients of weight $2k$ cusp forms (see \cite[Satz 7]{Pe1}), yielding a link with \eqref{eqn:GrossZagierCondition}. 

We next consider an example that emulates the behaviour from \eqref{eqn:inftyeigenvalue}. Set 
\begin{align}\label{eqf6i}
	f_{6,i}(z) &:= \frac{\De(z)}{E_6(z)} = q + 480q^2 + 258804q^3 + 138542080q^4 + O\left(q^5\right),\\
	\nonumber
	g_5(z) &:= \frac1\a\frac{E_8(z)}{\De(z)} \left(j(z)^4\hspace{-.05cm}-\hspace{-.05cm}3480j(z)^3\hspace{-.05cm}+\hspace{-.05cm}3838860j(z)^2 \hspace{-.05cm}-\hspace{-.05cm}1425282400j(z)\hspace{-.05cm}+\hspace{-.05cm}114237825024\right),\\
	\nonumber
	&\hspace{.1cm}= \frac1\a\left(q^{-5}\hspace{-.05cm}-\hspace{-.05cm}3126q^{-1}\hspace{-.05cm}+\hspace{-.05cm} 26994415788736q+519615094283304960q^2\right) \hspace{-.05cm}+\hspace{-.05cm} O\left(q^3\right) \in S_{-4}^!,\\
	\nonumber
	g_7(z) &:= \frac1\a\frac{E_8(z)}{\De(z)}\left(j(z)^6-4968j(z)^5+9176868j(z)^4- 7736486240j(z)^3\right.\\
	\nonumber
	&\hspace{1.25cm}\left.+ 2925506969154j(z)^2-411526489432464j(z)+12317318339088384\right)\\
	\nonumber
	&\hspace{.1cm}= \frac1\a\left(q^{-7}-16808q^{-1}+10625045828793993q+1689691172521357344768q^2\right)\\
	\nonumber
	&\hspace{11cm}+ O\left(q^3\right)\in S^!_{-4},
\end{align}
where $\a:=\frac{E_8(i)}{\De(i)}= 1187.006489\dots$, $j(z):=\frac{E_4(z)^3}{\De(z)}$ is the {\it modular $j$-function}, and the Fourier expansions hold for $y>1$.
A straightforward calculation yields that
\begin{align*}
	f_{6,i}(z)|_6T_5-\sigma_5(5)f_{6,i}(z) &=-2^{-10}\mathcal{G}_{g_5}(z,i)\in\GZp{6}{-1}{i},\\
	f_{6,i}(z)|_6T_7-\sigma_5(7)f_{6,i}(z) &=-2^{-10}\mathcal{G}_{g_7}(z,i)\in\GZp{6}{-1}{i}.
\end{align*}
This behaviour, which parallels \eqref{eqn:inftyeigenvalue}, is not isolated. Much like $f_{6,i\infty}$, the function $f_{6,i}\in\mathbb{S}_{6,-1}^{i}$ generates the one-dimensional quotient space $\mathbb{S}_{6,-1}^{\z}/\GZp{6}{-1}{i}$, and the following theorem gives an isomorphism as Hecke modules between the spaces $S_{2k}$ and $M_{2k}$ and the quotient spaces formed by factoring $\bbS_{2k,-\ell}^{\z}$ out by the subspaces of $\bbS_{2k,-\ell}^{\z}$ defined in \eqref{eqGZ} and \eqref{eqGZ2}. 

\begin{theorem}\label{thmIso}
	For $k,\ell\in\N$ with $k\ge2$, the spaces $\GZ_{2k,-\ell}^\z$ and $\GZp{2k}{-\ell}{\z}$ are Hecke-stable subspaces of $\bbS_{2k,-\ell}^\z$ and the following are isomorphisms of Hecke modules:
	\[
		\bbS_{2k,-\ell}^\z\Big/\GZ_{2k,-\ell}^\z\cong S_{2k},\qquad \bbS_{2k,-\ell}^\z\Big/\GZp{2k}{-\ell}{\z}\cong M_{2k}.
	\]
\end{theorem}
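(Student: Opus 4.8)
The plan is to transport the argument of \cite[Theorem 1.2]{BGKO} from the cusp $i\infty$ to the interior point $\z$: the operative analogy is $S_{2k}^!\cap S_{2k}^\perp\leftrightarrow\bbS_{2k,-\ell}^{\z}$, the operator $D^{2k-1}\leftrightarrow$ the map $f\mapsto\calG_{R_{2-2k}^{\ell-1}(f)}(z,\z)$ (these agree when $\ell=2k$ by \eqref{eqn:BolsIdentity}), with $M_{2-2k}^!$ and $S_{2-2k}^!$ playing the same roles as before; since $S_{2-2k}^!\subseteq M_{2-2k}^!$ has codimension one (the constant term), we have correspondingly $\GZp{2k}{-\ell}{\z}\subseteq\GZ_{2k,-\ell}^{\z}$ with $\dim\leq1$, and it suffices to produce a Hecke-equivariant surjection $\Phi\colon\bbS_{2k,-\ell}^{\z}\twoheadrightarrow M_{2k}$ with $\ker\Phi=\GZp{2k}{-\ell}{\z}$ and to check that $\Phi\big(\GZ_{2k,-\ell}^{\z}\big)$ is the Eisenstein line $\C E_{2k}$; composing $\Phi$ with the projection $M_{2k}\twoheadrightarrow S_{2k}$ then yields the first isomorphism. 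The key preliminary computation is $\Psi_{2k,-\ell}^{\z}\big|_{2k}T_m$: unfolding the Hecke operator against the Poincar\'e series, I would express it as an explicit finite combination of elliptic Poincar\'e series $\Psi_{2k,-\ell}^{\z'}$ with $\z'$ running through the Hecke correspondence $T_m\cdot\z$. This pins down $\bbS_{2k,-\ell}^{\z}$ as a concrete Hecke-stable subspace of the meromorphic cusp forms orthogonal to $S_{2k}$ (\Cref{lem:PsiGenerate}); moreover the Fourier-coefficient functionals $F\mapsto a_F(n)$ ($n\ge1$) separate its points and intertwine $|_{2k}T_m$ with the usual action on $q$-series.

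To see the containments $\GZ_{2k,-\ell}^{\z},\GZp{2k}{-\ell}{\z}\subseteq\bbS_{2k,-\ell}^{\z}$, I would combine the meromorphic continuation of $\calG_g(z,\z)$ in $z$ (\Cref{lem:Ggcontinue}) with the commutation of $R_{2-2k}^{\ell-1}$ with the Hecke operators to match $\calG_{R^{\ell-1}(f)}(z,\z)$, $f\in M_{2-2k}^!$, against an explicit combination of the translated Poincar\'e series above, comparing elliptic expansions at $\z$ and its $\SLZ$-translates and invoking the fact that a meromorphic cusp form orthogonal to $S_{2k}$ is determined by those principal parts (\cite[Satz 7--8]{Pe2}). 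Hecke-stability then follows from an identity of the shape $\calG_g(z,\z)\big|_{2k}T_n=\sum\calG_{g'}(z,\z)$ with each $g'$ again in $R_{2-2k}^{\ell-1}(M_{2-2k}^!)$, respectively $R_{2-2k}^{\ell-1}(S_{2-2k}^!)$; this reduces to the Hecke-algebra relations $T_mT_n=\sum_{d\mid(m,n)}d^{2k-1}T_{mn/d^2}$, to the fact that $T_n$ preserves $M_{2-2k}^!$ and $S_{2-2k}^!$ and commutes with raising, and to keeping track of the normalizing factor $m^{2k-\ell}$ in \eqref{eqn:Ggdef}.

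For the map itself, I would define $\Phi$ as the $i\infty$-duality of \cite{BGKO} localized at $\z$: send $F\in\bbS_{2k,-\ell}^{\z}$ to the unique $\Phi(F)\in M_{2k}$ whose pairing against each $f\in S_{2k}$, together with its Eisenstein component (pinned down by a $T_m$-compatible normalization), matches the regularized Petersson pairings $\big\langle F,\Psi_{2k,-\ell}^{\z}\big|_{2k}T_m\big\rangle_{\mathrm{reg}}$ ($m\ge1$) — equivalently, $\Phi(F)$ is obtained from the $q$-series $\sum_{m\ge1}\big\langle F,\Psi_{2k,-\ell}^{\z}\big|_{2k}T_m\big\rangle_{\mathrm{reg}}q^m$ by a holomorphic-projection-type argument and a renormalization. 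Hecke-equivariance follows from the Hecke computation of the first paragraph together with self-adjointness of $T_m$ for the regularized product, and surjectivity onto $M_{2k}$ from the fact that elliptic Poincar\'e series realize every admissible principal part at $\z$ (\cite[Satz 7]{Pe1}, cf.\ \Cref{rem:connection}). The inclusion $\GZp{2k}{-\ell}{\z}\subseteq\ker\Phi$ should drop out of the previous paragraph: it merely encodes the vanishing condition \eqref{eqn:GrossZagierCondition} satisfied by the principal parts of weakly holomorphic modular forms.

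The main obstacle is the reverse inclusion $\ker\Phi\subseteq\GZp{2k}{-\ell}{\z}$: one must show that any $F\in\bbS_{2k,-\ell}^{\z}$ whose elliptic-expansion data at $\z$ satisfies \eqref{eqn:GrossZagierCondition} (constant-term normalization included) is genuinely of the form $\calG_{R^{\ell-1}(f)}(z,\z)$ with $f\in S_{2-2k}^!$. This requires computing the elliptic expansion of $\calG_{R^{\ell-1}(f)}(z,\z)$ at $\z$ precisely enough to read its principal part off from the Fourier coefficients of $f$, and then invoking the surjectivity in \cite[Satz 7]{Pe1} to realize an arbitrary admissible principal part; controlling the difference between $M_{2-2k}^!$ and $S_{2-2k}^!$ throughout is exactly what separates the two quotients. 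Finally one checks that $\Phi$ sends the extra generator of $\GZ_{2k,-\ell}^{\z}/\GZp{2k}{-\ell}{\z}$ — the image $\calG_{R^{\ell-1}(g_0)}(z,\z)$ of a $g_0\in M_{2-2k}^!$ with nonzero constant term — to a nonzero multiple of $E_{2k}$, so that the eigenvalue system $m\mapsto\sigma_{2k-1}(m)$ occurs in $\bbS_{2k,-\ell}^{\z}/\GZp{2k}{-\ell}{\z}$ but not in $\bbS_{2k,-\ell}^{\z}/\GZ_{2k,-\ell}^{\z}$; for $k=3$ this is precisely the behaviour of $f_{6,i}$, $g_5$, $g_7$ recorded in \eqref{eqf6i}.
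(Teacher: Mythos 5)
Your high-level architecture --- reduce to the known $i\infty$ statement of \cite{BGKO}, with $\bbS_{2k,-\ell}^{\z}$ playing the role of $S_{2k}^!\cap S_{2k}^\perp$ and $f\mapsto\calG_{R_{2-2k}^{\ell-1}(f)}(z,\z)$ the role of $D^{2k-1}$ --- is the right one, and your unfolding of $\Psi_{2k,-\ell}^{\z}|_{2k}T_m$ over the Hecke orbit of $\z$ matches Proposition \ref{prop:PsiTnBothVariable}. But the engine you propose for the isomorphism differs from the paper's and is not carried out. The paper never constructs a pairing-based surjection $\Phi\colon\bbS_{2k,-\ell}^{\z}\to M_{2k}$; it builds the map in the opposite direction, $\varrho_{2-2k,\ell}^{\z}\colon\mathcal{F}\mapsto\calG_{R_{2-2k}^{\ell-1}(\mathcal{F})}(z,\z)$ on $H_{2-2k}^{\cusp}$, and proves the single explicit identity \eqref{eqn:varphiFr}, namely $\varrho_{2-2k,\ell}^{\z}(\mathcal{F}_{2-2k,-r})=c\,\y^{2k-\ell}\Psi_{2k,-\ell}^{\z}|_{2k,z}T_r$ for an explicit nonzero constant $c$. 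This sends the basis $\{\mathcal{F}_{2-2k,-r}:r\in\N\}$ of $H_{2-2k}^{\cusp}$ to the generating set of $\bbS_{2k,-\ell}^{\z}$, so $\varrho_{2-2k,\ell}^{\z}$ is a Hecke-equivariant bijection carrying $M_{2-2k}^!$ onto $\GZ_{2k,-\ell}^{\z}$ and $S_{2-2k}^!$ onto $\GZp{2k}{-\ell}{\z}$ by definition; the theorem then falls out of $H_{2-2k}^{\cusp}/M_{2-2k}^!\cong S_{2k}$ and $H_{2-2k}^{\cusp}/S_{2-2k}^!\cong M_{2k}$ (Lemma \ref{lem:HeckeWeakly}). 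The identity \eqref{eqn:varphiFr} rests on the Fourier expansion of Lemma \ref{lemPsiFourier}, whose $m$-th coefficient is, up to constants, $R_{2-2k}^{\ell-1}(\mathcal{F}_{2-2k,-m}(\z))$; this is the one computation your proposal is missing, and it is what makes the subspace containments, the Hecke stability, and the kernel identification all immediate.

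The gaps in your route are concrete. First, the regularized Petersson pairing $\langle F,\Psi_{2k,-\ell}^{\z}|T_m\rangle_{\mathrm{reg}}$ between two meromorphic cusp forms whose poles lie in the same $\SL_2(\Z)$-orbits is delicate to define, and it is nowhere shown that the series $\sum_{m\ge1}\langle F,\Psi_{2k,-\ell}^{\z}|T_m\rangle_{\mathrm{reg}}q^m$ becomes, after ``holomorphic projection and renormalization,'' an element of $M_{2k}$; this is asserted, not proved. Second, and more seriously, you flag the inclusion $\ker\Phi\subseteq\GZp{2k}{-\ell}{\z}$ as ``the main obstacle'' and leave it open --- but that inclusion is essentially the entire content of the theorem, so the argument does not close. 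Third, your plan to verify $\GZ_{2k,-\ell}^{\z}\subseteq\bbS_{2k,-\ell}^{\z}$ by comparing elliptic expansions of $\calG_g(z,\z)$ at $\z$ and its translates is circular as stated: $\calG_g(z,\z)$ is defined only as a $q$-series convergent for $y$ large, and the only access to its behaviour near $\z$ is the meromorphic continuation of Lemma \ref{lem:Ggcontinue}, whose proof is precisely the identification of $\calG_g$ with a finite linear combination of the $\Psi_{2k,-\ell}^{\z}|T_n$ --- that is, the statement you would be trying to deduce from it.
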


The example $f_{6,i}$ corresponds to $E_6$ under the second isomorphism $\bbS_{2k,-\ell}^\z\big/\GZp{2k}{-\ell}{\z}\cong M_{2k}$ for $k=3$, $\ell=1$, and $\z=i$. We further elaborate upon this in \Cref{sec:example} and also show that
\begin{equation}\label{eqn:defG}
	G(z) := \frac{\De(z)^2}{E_4(z)^3+15^3\De(z)} = q^2 - 4143q^3 + 16868385q^4 - 68686682635q^5 + O\left(q^6\right)
\end{equation}
maps to $\Delta$ under the first isomorphism $\bbS_{2k,-\ell}^\z\big/\GZ_{2k,-\ell}^\z\cong S_{2k}$ for $k=6$, 
$\ell=1$, and $\z=\frac{1+i\sqrt{7}}{2}$.

The paper is organized as follows. In Section \ref{sec:prelim}, we introduce some spaces of non-holomorphic and meromorphic modular forms such as harmonic Maass forms and meromorphic cusp forms, construct bases for these spaces, and recall certain operators. In \Cref{secellprop}, we review some properties of elliptic Poincar\'e series originally considered by Petersson \cite{Pe1,Pe2}. In \Cref{sec:mainthm}, we investigate $\calG_g(z,\z)$ and prove \Cref{thmIso}. Finally, in \Cref{sec:example}, we describe how to construct meromorphic Hecke eigenforms using the isomorphism that is used to prove \Cref{thmIso} and demonstrate this construction by explaining how $f_{6,i}$ is chosen.

\section*{Acknowledgments}

The first author was supported by the Deutsche Forschungsgemeinschaft (DFG) Grant No. BR 4082/5-1. The research of the second author was supported by grants from the Research Grants Council of the Hong Kong SAR, China (project numbers HKU 17303618 and 17314122).

\section{Preliminaries}\label{sec:prelim}

\subsection{Meromorphic modular forms and harmonic Maass forms}\label{sec:modulardef}

We let $\g=\pabcd\in\GL_2^+(\Q)$ act on $\H$ via fractional linear transformations. As usual, for $\k\in\Z$ and a function $f:\H\to\C$, we define the {\it weight $2\k$ slash operator}
\begin{equation}\label{eqn:slashdef}
	f(z)\big|_{2\kappa}\gamma:=\det(\gamma)^{\kappa} (cz+d)^{-2\kappa}f(\gamma z).
\end{equation}
We include the variable $z$ in the notation for the slash operator $|_{2\k,z}$ and other operators if the variable is not clear from context. We say that $f$ satisfies {\it weight $2\k$ modularity for $\SL_2(\Z)$} if $f|_{2\k}\g=f$ for every $\g\in\SL_2(\Z)$. We call $f$ a {\it meromorphic modular form} (for $\SL_2(\Z)$) of weight $2\k$ if $f$ is meromorphic on $\H$, satisfies weight $2\k$ modularity for $\SL_2(\Z)$, and if there exists $n_0\in\Z$ such that $f(z)\asymp e^{-2\pi n_0y}$ as $z\to i\infty$. We say that $f$ has a {\it pole at $i\infty$} if $n_0<0$. A meromorphic modular form is a meromorphic cusp form if $n_0>0$. On the other hand, if $f$ has no poles in $\H$, then $f$ is a {\it weakly holomorphic modular form}. {\it(Holomorphic) modular forms} are those weakly holomorphic modular forms with $n_0\ge0$ and {\it cusp forms} are those holomorphic modular forms with $n_0>0$.

We also require certain non-holomorphic modular forms. Following \cite{BF}, a real-analytic function $f:\H\to\C$ is a {\it harmonic Maass form} of weight $2\k$ (for $\SL_2(\Z)$) if it satisfies:
\begin{enumerate}[leftmargin=*,label=\textnormal{(\arabic*)}]
	\item The function $f$ is modular of weight $2\kappa$ on $\SL_2(\Z)$.
	
	\item The function $f$ is annihilated by the {\it weight $2\kappa$ hyperbolic Laplace operator}
	\begin{equation}\label{eqn:Laplacedef}
		\Delta_{2\kappa}:= -y^2\left(\frac{\partial^2}{\partial x^2}+\frac{\partial^2}{\partial y^2}\right)+2i\kappa y\left(\frac{\partial}{\partial x}+i\frac{\partial}{\partial y}\right).
	\end{equation}

	\item There exists $c\in\R$ such that $f(z)=O(e^{cy})$ as $y\to\infty$.
\end{enumerate}
For $\k<0$ a harmonic Maass form of weight $2\k$ has a Fourier expansion (for some $n_0\in\Z$)
\[
	f(z)=\sum_{n\geq -n_0} c_f^+(n) q^{n} + c_f^-(0) y^{1-2\kappa} +\sum_{\substack{n\leq n_0\\ n\neq 0}} c_f^-(n) \Gamma(1-2\kappa,-4\pi ny) q^{n},
\]
where for $s \in \C$ with $\re(s)>0$ we let $\Gamma(s,y):=\int_{y}^{\infty}t^{s-1}e^{-t}dt$ denote the {\it incomplete gamma function}. The {\it principal part} of $f$ at $i\infty$ is
\[
	\sum_{n<0} c_f^+(n)q^n + c_f^-(0)y^{1-2\k} + \sum_{n>0} c_f^-(n)\Ga(1-2\k,-4\pi ny)q^n.
\]

Defining $\xi_{2\kappa}:=2iy^{2\kappa}\overline{\frac{\partial}{\partial\overline{z}}}$, we have 
\[
	\Delta_{2\kappa}=-\xi_{2-2\kappa}\circ\xi_{2\kappa},
\]
and $\xi_{2\k}$ maps harmonic Maass forms of weight $2\k$ to weakly holomorphic modular forms of weight $2-2\k$ (see \cite[Proposition 3.2]{BF}). We let $H_{2\k}^{\cusp}$ denote the space of harmonic Maass forms which map to cusp forms under $\xi_{2\k}$. Note that a harmonic Maass form $f$ lies in $H_{2\k}^{\cusp}$ if and only if there is a polynomial $P\in\C[q]$ such that $f(z)-P(q)$ is bounded as $y\to\infty$.

\subsection{Operators}

For $m\in\N$, the {\it $V$-operator} is given by 
\begin{equation}\label{eqn:Vdef}
	f(z)\big|V_m:=f(m z)= m^{-\kappa}f(z)\Big|_{2\kappa}\begin{pmatrix}m & 0 \\ 0 & 1\end{pmatrix}.
\end{equation}

We define the \textit{$U$-operator} by
\begin{equation}\label{eqn:Udef}
	f(z)\big|U_m:=m^{\kappa-1}\sum_{j=1}^m f(z)\Big|_{2\kappa}\begin{pmatrix}1 & j \\ 0 & m\end{pmatrix}.
\end{equation}
Note that if $f$ is translation-invariant, then $f$ has a (local) Fourier expansion of the type $f(z)=\sum_{n\in\Z}\cny{n}{y}q^n$. We have
$$
	f(z)\big|V_m=\sum_{n\in\Z}\cny{n}{my}q^{mn}\quad\text{and}\quad f(z)\big|U_m=\sum_{n\in\Z} \cny{mn}{\frac ym}q^n.
$$
Combining these operators, one defines the {\it $m$-th Hecke operator}\footnote{Note that there exist multiple normalizations of $T_m$ (differing by a power of $m$) in the literature. The above definition \eqref{eqn:Tmdef} is chosen so that it preserves integrality of Fourier coefficients for $\k\in\N$.} ($m\in\N$)
\begin{equation}\label{eqn:Tmdef}
	f(z)\big|_{2\kappa}T_m:=\sum_{n\in\Z} \sum_{r\mid \gcd(m,n)} r^{2\kappa-1} \cny{\frac{mn}{r^2}}{\frac{r^2y}{m}} q^n= \sum_{r\mid m} r^{2\kappa-1} f(z)\big|U_{\frac{m}{r}}\circ V_r.
\end{equation}

If $f$ satisfies weight $2\k$ modularity for $\SL_2(\Z)$, then so does $f|_{2\k}T_m$ (see e.g. \cite[Proposition 2.3]{OnoBook}). For $m\in\N$, we let
\begin{gather}\label{eqMm}
	\mathcal{M}_m:=\left\{M\in\textnormal{Mat}_{2\times2}(\Z)\: :\: \det (M)=m\right\}
\end{gather}
be the set of $2\times2$ integer matrices of determinant $m$ and choose a set of representatives 
\[
	\calR_m := \left\{\begin{pmatrix}a&b\\0&d\end{pmatrix} : ad=m,\ d>0,\ b\pmod{d}\right\}
\]
for the right cosets in $\SL_2(\Z)\backslash\mathcal{M}_m$ (see e.g. \cite[p. 38]{Zagier123}).	
\rm
By \eqref{eqn:Tmdef}, \eqref{eqn:Udef}, and \eqref{eqn:Vdef}, we have 
\begin{equation}\label{eqn:Tgamma}
	f(z)\big|_{2\kappa}T_m = m^{\kappa-1} \sum_{M\in \calR_m} f(z)\big|_{2\kappa}M.
\end{equation}

We also require the well-known commutator relations.

\begin{lemma}\label{lem:raiseTm}
Let $f:\bbH\to\C$ be a real-analytic modular form of weight $2\kappa\in2\Z$, i.e., a real-analytic function on $\bbH$ which transforms like a modular form of weight $2\kappa$. Then
\[
R_{2\kappa}\left(f\big|_{2\kappa} T_m\right)= \frac1m R_{2\kappa}(f)\big|_{2\kappa+2}T_m.
\]
\end{lemma}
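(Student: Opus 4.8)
The plan is to verify the commutator relation $R_{2\kappa}(f|_{2\kappa}T_m) = \tfrac{1}{m}R_{2\kappa}(f)|_{2\kappa+2}T_m$ by reducing it, via \eqref{eqn:Tgamma}, to a statement about the interaction of the raising operator with a single slash by a matrix $M\in\mathcal{M}_m$. Concretely, since $f|_{2\kappa}T_m = m^{\kappa-1}\sum_{M\in\calR_m} f|_{2\kappa}M$ and $R_{2\kappa+2}(f|_{2\kappa}T_m)$ would naively want weight $2\kappa+2$, the key lemma to establish first is the matrix-by-matrix identity
\[
	R_{2\kappa}\left(f\big|_{2\kappa}M\right) = \det(M)^{-1}\, R_{2\kappa}(f)\big|_{2\kappa+2}M \qquad (M\in\GL_2^+(\R)).
\]
Given this, the right-hand side of Lemma~\ref{lem:raiseTm} is $m^{\kappa-1}\sum_{M\in\calR_m} m^{-1}R_{2\kappa}(f)|_{2\kappa+2}M$, and since raising from weight $2\kappa$ to $2\kappa+2$ is linear, the left-hand side is $m^{\kappa-1}\sum_{M\in\calR_m} R_{2\kappa}(f|_{2\kappa}M)$; the per-matrix identity then matches them term by term, with the $\det(M)^{-1}=m^{-1}$ producing exactly the claimed $\tfrac{1}{m}$. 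So the whole proof collapses to the single-matrix statement.

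To prove that single-matrix identity, I would first record that $R_{2\kappa}$ is, up to an automorphy-factor conjugation, essentially $2i\partial_z$: writing $M=\pabcd$ with $\det(M)=\Delta>0$, one has the chain rule $\partial_z\big(f(Mz)\big) = \Delta (cz+d)^{-2} (\partial_z f)(Mz)$, together with $\im(Mz) = \Delta\,y\,|cz+d|^{-2}$. One then computes $R_{2\kappa}$ applied to $(cz+d)^{-2\kappa} f(Mz)$ directly from the definition $R_{2\kappa} = 2i\partial_z + \tfrac{2\kappa}{y}$. The term $2i\partial_z$ produces (i) the derivative hitting $(cz+d)^{-2\kappa}$, giving a $-2\kappa c (cz+d)^{-2\kappa-1}$ contribution, and (ii) the derivative hitting $f(Mz)$, giving $\Delta (cz+d)^{-2\kappa-2}(\partial_z f)(Mz)$. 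The term $\tfrac{2\kappa}{y}$ gives $\tfrac{2\kappa}{y}(cz+d)^{-2\kappa}f(Mz)$. On the other side, $\det(M)^{-1}R_{2\kappa}(f)|_{2\kappa+2}M = \Delta^{-1}\cdot\Delta^{\kappa+1}(cz+d)^{-2\kappa-2}\big(R_{2\kappa}f\big)(Mz) = \Delta^{\kappa}(cz+d)^{-2\kappa-2}\big(2i(\partial_z f)(Mz) + \tfrac{2\kappa}{\im(Mz)}f(Mz)\big)$, and one substitutes $\im(Mz)=\Delta y|cz+d|^{-2}$; matching the two computations is then a purely algebraic check in which the terms involving $c$ and the terms involving $y^{-1}$ combine correctly. (One must be slightly careful about the $\det$-normalization convention $f|_{2\kappa}M = \det(M)^{\kappa}(cz+d)^{-2\kappa}f(Mz)$ in \eqref{eqn:slashdef}, which shifts powers of $\Delta$ but does not affect the final shape of the identity.)

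The main obstacle I anticipate is purely bookkeeping rather than conceptual: keeping the various powers of $\det(M)$ straight given the chosen normalization of the slash operator, and making sure the $\im(Mz)^{-1}$ term from the weight-$(2\kappa+2)$ raising operator reassembles with the $y^{-1}$ term and the $c\,(cz+d)^{-1}$ term from differentiating the automorphy factor. A clean way to avoid sign and power errors is to prove the statement first for the two generating types of matrices — lower-triangular $\pmat{a&0\\c&d}$ or even just the generators of $\GL_2^+(\R)$ — but since the direct computation is short, I would simply do it for a general $M$ and then specialize to $M\in\calR_m$ (where $c=0$, so even the $c$-term drops out entirely and the check is immediate: $R_{2\kappa}((cz+d)^{-2\kappa}f(Mz))$ with $c=0$, $d = m/a$ becomes trivial). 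In fact, because the representatives in $\calR_m$ are upper-triangular, one can streamline the entire proof by working only with $M = \pmat{a&b\\0&d}$, $ad=m$, where the identity reduces to the elementary fact that $R_{2\kappa}$ commutes appropriately with $z\mapsto \tfrac{az+b}{d}$ and the scaling of $y$; this is the version I would actually write down, invoking \eqref{eqn:Tgamma} to pass from matrices to $T_m$.
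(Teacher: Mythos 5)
Your overall strategy is the same as the paper's: expand $T_m$ as a finite sum of slash operators via \eqref{eqn:Tgamma} and commute $R_{2\kappa}$ past each individual $|_{2\kappa}M$. However, your key intermediate identity is wrong for the normalization \eqref{eqn:slashdef} actually used here. With $f|_{2\kappa}M=\det(M)^{\kappa}(cz+d)^{-2\kappa}f(Mz)$, the computation you outline (chain rule $\partial_z\bigl(f(Mz)\bigr)=\det(M)(cz+d)^{-2}(\partial_zf)(Mz)$ together with $\Im(Mz)=\det(M)\,y\,|cz+d|^{-2}$ and $c\bar z+d=(cz+d)-2icy$) yields
\[
R_{2\kappa}\left(f\big|_{2\kappa}M\right)=R_{2\kappa}(f)\big|_{2\kappa+2}M,
\]
with \emph{no} factor of $\det(M)^{-1}$: the extra power $\det(M)^{\kappa+1}$ carried by the weight-$(2\kappa+2)$ slash exactly absorbs the Jacobian $\det(M)$ produced by the chain rule. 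Your $\det(M)^{-1}$ would be correct for the classical normalization $\det(M)^{2\kappa-1}(cz+d)^{-2\kappa}f(Mz)$, which is not the one in \eqref{eqn:slashdef}; your parenthetical claim that the choice of convention ``does not affect the final shape of the identity'' is precisely where the argument goes astray, since the exponent of $\det(M)$ in the per-matrix commutation does depend on the convention.

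Your write-up nevertheless lands on the stated lemma only because of a second, compensating error: you expand $\frac1mR_{2\kappa}(f)|_{2\kappa+2}T_m$ as $m^{\kappa-1}\sum_{M}m^{-1}R_{2\kappa}(f)|_{2\kappa+2}M$, i.e., with the weight-$2\kappa$ prefactor $m^{\kappa-1}$ from \eqref{eqn:Tgamma}. But $R_{2\kappa}(f)$ has weight $2\kappa+2$, so the correct prefactor is $m^{(\kappa+1)-1}=m^{\kappa}$, whence $\frac1mR_{2\kappa}(f)|_{2\kappa+2}T_m=m^{\kappa-1}\sum_{M}R_{2\kappa}(f)|_{2\kappa+2}M$. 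This weight-dependence of the normalization of $T_m$ is the true and only source of the $\frac1m$ in the lemma; it does not come from a determinant factor in the per-matrix commutation. Had you carried out the algebraic check you describe, you would have discovered the discrepancy. The fix is straightforward: replace your per-matrix identity by the det-free one above and redo the (now trivial) power count. The rest of your outline, including the observation that one may restrict to the upper-triangular representatives in $\calR_m$ where $c=0$, is fine and matches what the paper does via the $U$- and $V$-operators.
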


\begin{proof}
	Plugging the last equalities in \eqref{eqn:Vdef} and \eqref{eqn:Udef} into \eqref{eqn:Tmdef} and using that the raising operator commutes with the slash operator on $\GL_2^+(\R)$ yields the claim.
\end{proof}

\subsection{Elliptic Poincar\'e series}\label{sec:elliptic}

For $z,\z\in\H$, let $X_\z(z):=\frac{z-\z}{z-\ol\z}$, $r_\z(z):=|X_\z(z)|$. For $\k\in\Z$, the weight $2\k$ {\it elliptic expansion around $\z$} of $f:\H\to\C$ is defined as
\[
	f(z)=(z-\overline{\z})^{-2\kappa}\sum_{n\in\Z} c_{\z}\left(r_{\z}(z),n\right) X_{\z}(z)^n,
\]
for $r=r_\z(z)$ sufficiently small.\footnote{Considered by Petersson in \cite[(2a.16)]{Pe1} for holomorphic functions.} If $f$ is meromorphic on $\H$, then there exists $n_{\z,0}$ such that $c_\z(r,n)=0$ for $n<n_{\z,0}$ and $c_{\z}(r,n)$ is independent of $r$; in this case, we simply write $c_\z(n)$. The {\it principal part} of $f$ around $\z$ is given by
\[
	(z-\overline{\z})^{-2\kappa}\sum_{n=n_{\z,0}}^{-1} c_{\z}(n) X_{\z}(z)^n.
\]

For $k\in\N$ with $k\geq 2$ and $\ell\in\Z$, we define the {\it elliptic Poincar\'e series} of weight $2k$ as 
\begin{equation}\label{eqPsi}
	\Psi_{2k,\ell}^\z(z):= \sum_{\gamma\in\SL_2(\Z)} \left.\psi_{2k,\ell}^\z(z)\right|_{2k,z}\gamma,
\end{equation}
with seed 
\begin{equation}\label{eqn:psidef}
	\psi_{2k,\ell}^\z(z):=(z-\overline\z)^{-2k}X_{\z}(z)^\ell.
\end{equation}
Denote by $\w_\z$ half of the size of the stabilizer of $\z$ in $\SL_2(\Z)$. Petersson \cite[Satz 7]{Pe2} showed that if $\ell\not\equiv-k\Pmod{\w_\z}$, then $\Psi_{2k,\ell}^\z$ vanishes identically. Moreover, for $\ell<0$ with $\ell\equiv-k\Pmod{\w_\z}$, \cite[Satz 7]{Pe2} implies that $\Psi_{2k,\ell}^\z$ has principal part $2\w_\z(z-\ol\z)^{-2k}X_\z(z)^\ell$ in its elliptic expansion around the point $\z$ and is holomorphic at points in $\H\cup\{i\infty\}$ which are not $\SL_2(\Z)$-equivalent to $\z$. Moreover, Petersson showed in \cite[S\"atze 7--9]{Pe2} that these Poincar\'e series generate $\S_{2k}$ (see also \cite[Section 4.3]{ImamogluOSullivan}).

\begin{lemma}\label{lem:PsiGenerate}
	The space $\bbS_{2k}$ is generated by $\{\Psi_{2k,\ell}^\z \: : \: \ell\in\Z,\: \z\in\bbH\}$.
\end{lemma}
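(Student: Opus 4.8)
The plan is to show that the span of all elliptic Poincaré series $\Psi_{2k,\ell}^\z$ equals $\bbS_{2k}$, the space of meromorphic cusp forms of weight $2k$ that are orthogonal to holomorphic cusp forms. The containment of the span of the $\Psi_{2k,\ell}^\z$ in $\bbS_{2k}$ follows directly from Petersson's results recalled just before the statement: for $\ell<0$ with $\ell\equiv -k\pmod{\w_\z}$, the series $\Psi_{2k,\ell}^\z$ is a meromorphic modular form with a pole only at points $\SL_2(\Z)$-equivalent to $\z$ and is holomorphic at $i\infty$ (in fact vanishing there), hence is a meromorphic cusp form; and by \cite[Sätze 7--9]{Pe2} these series are orthogonal to $S_{2k}$ under the regularized Petersson inner product. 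For $\ell\ge 0$ or $\ell\not\equiv-k\pmod{\w_\z}$, the series either vanishes or is a holomorphic element orthogonal to $S_{2k}$, so in all cases $\Psi_{2k,\ell}^\z\in\bbS_{2k}$, giving one inclusion.

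For the reverse inclusion, let $f\in\bbS_{2k}$ be arbitrary. First I would locate the poles of $f$ in a fundamental domain $\calF$ for $\SL_2(\Z)\backslash\H$: since $f$ is meromorphic on $\H$, modular, and decays exponentially toward $i\infty$, it has only finitely many poles mod $\SL_2(\Z)$, say at points $\z_1,\dots,\z_r\in\H$ with pole orders $N_1,\dots,N_r$. At each $\z_j$ the elliptic expansion of $f$ has a principal part $(z-\ol{\z_j})^{-2k}\sum_{n=n_{\z_j,0}}^{-1}c_{\z_j}(n)X_{\z_j}(z)^n$. Using the fact that $\Psi_{2k,\ell}^{\z_j}$ for $\ell<0$, $\ell\equiv-k\pmod{\w_{\z_j}}$, has principal part exactly $2\w_{\z_j}(z-\ol{\z_j})^{-2k}X_{\z_j}(z)^\ell$ around $\z_j$ and is holomorphic at all other points of $\H\cup\{i\infty\}$ inequivalent to $\z_j$, I would subtract off an explicit finite linear combination $\sum_{j}\sum_{\ell}a_{j,\ell}\Psi_{2k,\ell}^{\z_j}$ to cancel all the principal parts of $f$. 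One must check that the relevant principal part coefficients $c_{\z_j}(n)$ only occur for indices $n\equiv-k\pmod{\w_{\z_j}}$; this is forced by the fact that $f$ is invariant under the (elliptic) stabilizer of $\z_j$ in $\SL_2(\Z)$, so the constraint $\ell\equiv-k\pmod{\w_{\z_j}}$ matches exactly the indices appearing in the elliptic expansion of a weight $2k$ modular form.

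The difference $h:=f-\sum_{j,\ell}a_{j,\ell}\Psi_{2k,\ell}^{\z_j}$ is then a meromorphic modular form of weight $2k$ with no poles in $\H$ and still exponentially decaying at $i\infty$; hence $h$ is a holomorphic cusp form, i.e. $h\in S_{2k}$. On the other hand, since $f\in\bbS_{2k}$ is orthogonal to $S_{2k}$ and each $\Psi_{2k,\ell}^{\z_j}$ is also orthogonal to $S_{2k}$ (by the first paragraph), $h$ is orthogonal to $S_{2k}$; being itself in $S_{2k}$, it must vanish. Therefore $f=\sum_{j,\ell}a_{j,\ell}\Psi_{2k,\ell}^{\z_j}$ lies in the span of the elliptic Poincaré series, completing the proof.

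The main obstacle is the bookkeeping in the subtraction step: one needs the precise normalization of the principal part of $\Psi_{2k,\ell}^\z$ from Petersson's \cite[Satz 7]{Pe2}, the fact that elliptic Poincaré series at different base points $\z$ have disjoint pole sets mod $\SL_2(\Z)$ (so their principal parts do not interfere), and the congruence condition $\ell\equiv-k\pmod{\w_\z}$ to know which Poincaré series are available at an elliptic fixed point. The analytic input that a meromorphic modular form with no poles in $\H$ and exponential decay at $i\infty$ is a holomorphic cusp form is routine, and the orthogonality-forces-vanishing argument at the end is standard once one grants that the regularized Petersson inner product restricts to the usual positive-definite one on $S_{2k}$.
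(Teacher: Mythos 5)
The paper offers no proof of this lemma at all: it is attributed to Petersson \cite[S\"atze 7--9]{Pe2} (with a pointer to Imamo\u{g}lu--O'Sullivan), so your reconstruction is filling in a citation rather than competing with an argument in the text. Your overall strategy --- subtract a finite linear combination of the negative-index series $\Psi_{2k,\ell}^{\z_j}$ to cancel all elliptic principal parts of $f$, then identify the holomorphic remainder --- is exactly Petersson's, and the bookkeeping you single out (the normalization $2\w_{\z_j}(z-\ol{\z_j})^{-2k}X_{\z_j}(z)^\ell$ of the principal part, the congruence $\ell\equiv-k\pmod{\w_{\z_j}}$ forced by invariance under the stabilizer of $\z_j$, and the disjointness of the pole sets for $\SL_2(\Z)$-inequivalent base points) is handled correctly.

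There is, however, one genuine error. You assert that for $\ell\ge0$ the series $\Psi_{2k,\ell}^\z$ ``either vanishes or is a holomorphic element orthogonal to $S_{2k}$.'' This is false: Petersson's coefficient formula gives $\langle f,\Psi_{2k,\ell}^\z\rangle$ as a nonzero multiple of the $\ell$-th elliptic expansion coefficient of $f$ at $\z$ for $f\in S_{2k}$ and $\ell\ge0$, so these series are not orthogonal to $S_{2k}$ --- on the contrary, they span it (e.g.\ $\langle\De,\Psi_{12,0}^{2i}\rangle$ is a nonzero multiple of $\De(2i)\ne0$). Consequently your reading of $\bbS_{2k}$ as the orthogonal complement of $S_{2k}$ inside the meromorphic cusp forms cannot be what the lemma intends, since the generating set ranges over all $\ell\in\Z$ and already fails to lie in that complement; here $\bbS_{2k}$ must be the full space of meromorphic cusp forms of weight $2k$, while it is the negative-index subfamily $\Psi_{2k,-\ell}^\z$, $\ell\in\N$, that spans the orthogonal complement (as stated in the introduction of the paper). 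With that reading the forward inclusion is immediate, and the reverse inclusion is repaired by changing only the last step: after cancelling the principal parts the remainder $h$ lies in $S_{2k}$, and one concludes not that $h=0$ but that $h$ is a linear combination of the $\Psi_{2k,\ell}^\z$ with $\ell\ge0$, again by Petersson's coefficient formula (choose $\z$ and finitely many $\ell\ge0$ so that the pairing matrix against a basis of $S_{2k}$ is nonsingular). Your orthogonality-forces-vanishing ending is the right argument for the restricted statement that the negative-index series span the complement of $S_{2k}$, which is the version actually exploited elsewhere in the paper, but it does not prove the lemma as stated.
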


We also require a two-variable Poincar\'e series that is $\SL_2(\Z)$-invariant, has singularities in $\H$, and is an eigenfunction under the Laplace operator $\De_0$ in both variables. For $z,\z\in\H$ and $s\in\C$ with $\re(s)>1$, the {\it automorphic Green's function} is defined by
\begin{equation}\label{eqn:Gsdef}
	G_s(z,\z) := \sum_{\gamma\in\SL_2(\Z)} g_s^\H (z,\gamma\z).
\end{equation}
Here $g_s^\H(z,\z):=-Q_{s-1}(1+\frac{|z-\z|^2}{2y\y}))$ with $Q_s$ the Legendre function of the second kind (see \cite[14.2.1]{NIST}) and $d(z,\z):=\log(\frac{|z-\ol\z|+|z-\z|}{|z-\ol\z|-|z-\z|})$ denotes the hyperbolic distance between $z$ and $\z$. For further properties of $G_s(z,\z)$, see \cite{Fay,Hejhal}.

\subsection{Harmonic Poincar\'e series}\label{sec:harmonic}

We also require a special family of harmonic Maass forms. To describe it, for $w>0$ set 
\[
	\mathcal{M}_{\kappa,s}(-w):=w^{-\frac{\kappa}{2}} M_{-\frac{\kappa}{2},s-\frac{1}{2}}(w),
\]
where $M_{a,b}(c)$ denotes the $M$-Whittaker function (see \cite[13.14.2]{NIST}). For $k,m\in\N$ define
\[
	\varphi_{2-2k,-m}(z):=-\mathcal{M}_{2-2k,k}(-4\pi m y) e^{-2\pi i mx}.
\]
For $k\ge2$ we let $\calF_{2-2k,-m}$ be the weight $2-2k$ {\it harmonic Maass--Poincar\'e series}
\[
	\calF_{2-2k,-m}(z) := \frac{1}{(2k-1)!}\sum_{\g\in\Ga_\infty\sm\SL_2(\Z)} \vf_{2-2k,-m}(z)\big|_{2-2k}\g,
\]
where $\Ga_\infty:=\{\pm\pmat{1&1\\0&1}:n\in\Z\}$. The function $\calF_{2-2k,-m}$ has principal part $e^{-2\pi imz}$ (see the remark before \cite[Corollary 6.12]{MockBook}) and $\{\calF_{2-2k,-m}:m\in\N\}$ is a basis for the space $H_{2-2k}^{\cusp}$.

\section{Properties of elliptic Poincar\'e series}\label{secellprop}

To prove \Cref{thmIso}, we require some properties of the elliptic Poincar\'e series. We first investigate the relation between the Hecke operators on $\Psi_{2k,\ell}^\z(z)$ in $z$ and $\z$.

\begin{proposition}\label{prop:PsiTnBothVariable}
	For $k\in\N$ with $k\ge2$, $\ell\in\Z$, and $n\in\N$, we have, $\y:=\Im(\z)$ throughout,
	\[
		\Psi_{2k,\ell}^\z(z)\Big|_{2k,z}T_n = \leg n\y^{2k+\ell}\left(\y^{2k+\ell}\Psi_{2k,\ell}^\z(z)\right)\Big|_{-2k-2\ell,\z}T_n.
	\] 
\end{proposition}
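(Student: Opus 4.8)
The identity relates the Hecke operator acting in the variable $z$ (on the weight $2k$ object $\Psi_{2k,\ell}^\z$) to the Hecke operator acting in the variable $\z$ (on a related weight $-2k-2\ell$ object). The natural strategy is to write everything in terms of the explicit coset-sum formula \eqref{eqn:Tgamma} and exploit the symmetry of the seed $\psi_{2k,\ell}^\z(z)=(z-\overline\z)^{-2k}X_\z(z)^\ell$ in $z$ and $\z$. First I would record the key symmetry of the seed: for $M\in\GL_2^+(\Q)$, one has a transformation law converting a slash in $z$ into a slash in $\z$, coming from the fact that $X_\z(z)$, $z-\overline\z$, and $z-\zeta$ transform in predictable ways under simultaneous fractional linear substitution $z\mapsto Mz$, $\zeta\mapsto M\zeta$ (with $\SL_2(\R)$ acting on $\H$ and on $\overline\H$). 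In particular I expect a clean relation of the shape
\[
	\psi_{2k,\ell}^{\z}(z)\big|_{2k,z}M = \y^{-2k-\ell}\left(\y^{2k+\ell}\psi_{2k,\ell}^{\z}(z)\right)\big|_{-2k-2\ell,\z}M
\]
(up to a determinant factor that I will track carefully), because $X_\z(z)=-X_z(\z)$ and $(z-\overline\z)=-\overline{(\zeta-\overline z)}$ tie the two elliptic variables together; the power $2k+\ell$ is exactly the one that makes $\y^{2k+\ell}\psi_{2k,\ell}^\z(z)$ transform with weight $-2k-2\ell$ in $\z$. This is the main computational lemma and, I expect, the main obstacle: getting the weights, the factor $\y^{2k+\ell}$, and the determinant normalizations (the $m^{\kappa-1}$ in \eqref{eqn:Tgamma}, applied once with $\kappa=k$ and once with $\kappa=-k-\ell$, producing the discrepancy $n^{(k-1)-(-k-\ell-1)}=n^{2k+\ell}$ that matches $\leg n\y^{2k+\ell}$) all to line up. The Legendre symbol $\leg n\y$ should emerge from the half-integral bookkeeping of $\y^{2k+\ell}$ versus the fact that one side carries $\y$ raised to the power that changes parity; more precisely, since $\Im(M\zeta) = \det(M)\,\y\,|c\zeta+d|^{-2}$ for $M=\pmat{a&b\\c&d}\in\calR_n$, pulling the factor $\y^{2k+\ell}$ through the slash $|_{-2k-2\ell,\z}M$ produces $\det(M)^{2k+\ell}=n^{2k+\ell}$ times $|c\zeta+d|^{-2(2k+\ell)}$, and reconciling this against the $(c\zeta+d)^{2k+2\ell}$ from the weight-$(-2k-2\ell)$ slash factor leaves a residual sign/scalar that packages into $\leg n\y$.

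Concretely, the steps are: (1) Prove the seed identity above for a single matrix $M\in\calR_n$, by direct substitution, carefully using $X_{M\z}(Mz) = \tfrac{(cz+d)}{(c\overline\z+d)}\,\cdot\,\tfrac{\det M}{\det M}\,X_\z(z)$ — i.e. working out how $X_\z(z)$ transforms — together with the analogous law for $z-\overline\z$ and $z-\zeta$. (2) Sum over $M\in\calR_n$: the left side becomes $n^{k-1}\sum_{M\in\calR_n}\psi_{2k,\ell}^\z(z)|_{2k,z}M = \Psi_{2k,\ell}^\z(z)|_{2k,z}T_n$ by \eqref{eqn:Tgamma}, after first applying the full $\SL_2(\Z)$-sum defining $\Psi$ in \eqref{eqPsi} and using that the two sums (over $\SL_2(\Z)$ defining $\Psi$ and over $\calR_n$ defining $T_n$) can be interchanged and recombined — i.e. that the $z$-variable Hecke operator on the $\SL_2(\Z)$-average equals the $\SL_2(\Z)$-average of the $\calR_n$-sum of the seed. (3) On the right side, the same manipulation turns the $\calR_n$-sum of the (reweighted) seed into $T_n$ in the $\z$-variable acting on $\y^{2k+\ell}\Psi_{2k,\ell}^\z(z)$, once I check that $\y^{2k+\ell}\Psi_{2k,\ell}^\z(z)$ is genuinely modular of weight $-2k-2\ell$ in $\z$ (which follows from Petersson's description of the elliptic expansion of $\Psi$ recalled before \Cref{lem:PsiGenerate}, or directly from the seed-level weight bookkeeping). (4) Collect the determinant powers: left side carries $n^{k-1}$, right side carries $n^{-k-\ell-1}$ from $T_n$ at weight $-2k-2\ell$ plus $n^{2k+\ell}$ from pulling $\y^{2k+\ell}$ through; the net prefactor is $n^{k-1}/(n^{-k-\ell-1}\cdot n^{2k+\ell}) = n^{k-1-k+1} = n^{0}$ up to the promised $\leg n\y$, so the bookkeeping closes after one identifies the residual scalar with the Legendre symbol.

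One subtlety to flag: the sums defining $\Psi_{2k,\ell}^\z$ converge only conditionally/after regularization in some ranges, and one must ensure all the rearrangements (interchanging the $\SL_2(\Z)$-sum with the finite $\calR_n$-sum, and pulling out scalars) are legitimate — but since $\calR_n$ is a \emph{finite} set and the $\SL_2(\Z)$-sum is exactly the one used to define $\Psi$ in \eqref{eqPsi}, this is harmless: the whole argument takes place at the level of the seed plus the already-defined Poincaré-series sum, so no new convergence issue is introduced beyond what is needed to make sense of $\Psi$ itself. If desired, one can first establish the identity for $\re(s)$ large in a spectral deformation (or for $k$ large) and extend by the meromorphic continuation of \Cref{lem:Ggcontinue}, but I expect the finite-coset argument to work directly. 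The heart of the matter, and where I would spend the most care, is step (1): the precise transformation law of $X_\z(z)$ under the diagonal action and the resulting identification of the power $\y^{2k+\ell}$ and the Legendre symbol.
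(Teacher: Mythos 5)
Your overall strategy --- a seed-level transformation law converting the slash in $z$ into a slash in $\z$, followed by summing over coset representatives and tracking determinant normalizations --- is essentially the paper's, which splits the argument into \Cref{lem:PsinPsiTn} (identifying $\Psi_{2k,\ell}^\z|_{2k,z}T_n$ with a Poincar\'e series built from left-coset representatives) and \Cref{lem:PsiTnRel} (the explicit seed computation). However, there are two genuine problems. The more serious one is that you have misread the prefactor: in this paper $\leg{n}{\y}$ is not a Legendre symbol but simply the fraction $\frac{n}{\y}$ in parentheses, so the claimed prefactor is the elementary quantity $n^{2k+\ell}\y^{-(2k+\ell)}$. (This is forced by the notation: $\y=\Im(\z)$ is a positive real number, so a quadratic residue symbol ``$\left(\frac{n}{\y}\right)$'' is meaningless.) The parts of your plan that hunt for ``a residual sign \ldots that packages into $\leg{n}{\y}$'' via ``half-integral bookkeeping'' are chasing something that does not exist, and your step (4) never actually closes: the $n^{2k+\ell}$ arises from the mismatch $n^{k-1}$ versus $n^{-k-\ell-1}$ between the two Hecke normalizations combined with $\Im(M\z)=n\y|c\z+d|^{-2}$, the $\y^{-(2k+\ell)}$ undoes the $\y^{2k+\ell}$ inserted to give the $\z$-dependence weight $-2k-2\ell$, and once these are accounted for there is no residual scalar left to identify.

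The second problem is your step (1): the single-matrix identity $\psi_{2k,\ell}^{\z}(z)|_{2k,z}M = \y^{-2k-\ell}\bigl(\y^{2k+\ell}\psi_{2k,\ell}^{\z}(z)\bigr)|_{-2k-2\ell,\z}M$ with the \emph{same} $M$ on both sides is false. A direct computation (e.g.\ with $M=\pmat{n&0\\0&1}$) shows that slashing the seed by $M$ in $z$ produces the seed at the point $\adj(M)\z=nM^{-1}\z$, not at $M\z$; your guessed transformation law for $X_\z(z)$ is also off (one has $X_{M\z}(Mz)=\frac{c\ol\z+d}{c\z+d}X_\z(z)$, with no $z$-dependence at all). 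The identity only holds after summing over a full set of representatives, using that $M\mapsto\adj(M)$ is a determinant-preserving bijection of $\calM_n$ that interchanges left and right cosets --- this is precisely why the paper works with the left-coset set $\calL_n$ in \Cref{lem:PsinPsiTn} and performs the substitution $r\mapsto\frac nr$ in the proof of \Cref{lem:PsiTnRel}. Your step (2) gestures at a ``recombination'' of the two sums, so this part is repairable, but as written the key lemma you plan to prove matrix-by-matrix would fail.
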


In order to prove Proposition \ref{prop:PsiTnBothVariable}, we require another Poincar\'e series.  For $n\in\N$, we choose a set of representatives of the left cosets in $\calM_n/\SL_2(\Z)$ (see \eqref{eqMm}),
\[
	\calL_n := \left\{\begin{pmatrix}a&b\\0&d\end{pmatrix} : ad=n,\ d>0,\ b\pmod{a}\right\}
\]
and define 
\begin{equation}\label{eqn:PsiTndef}
	\Psi_{2k,\ell,n}^\z (z):= n^{k-1}\sum_{\gamma\in \SL_2(\Z)} \sum_{M\in\calL_n} \psi_{2k,\ell}^{\z}(z)\Big|_{2k,z}M\gamma
\end{equation}
with $\psi_{2k,\ell}^\z$ as in \eqref{eqn:psidef}. It is not hard to see that $\Psi_{2k,\ell,n}^\z$ is well-defined. They can be constructed from the functions $\Psi_{2k,\ell}^{\z}$ as follows.

\begin{lemma}\label{lem:PsinPsiTn}
	We have 
	\[
		\Psi_{2k,\ell,n}^{\z}(z)=\Psi_{2k,\ell}^{\z}(z)\big|_{2k,z}T_n.
	\]
\end{lemma}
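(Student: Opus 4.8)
The plan is to expand both sides as sums over the full set $\calM_n$ of integer $2\times2$ matrices of determinant $n$ and check that the two expansions coincide termwise. Starting from the right-hand side, formula \eqref{eqn:Tgamma} together with the definition \eqref{eqPsi} and the multiplicativity of the weight $2k$ slash action of $\GL_2^+(\R)$ (which follows from the cocycle relation $j(\gamma_1\gamma_2,z)=j(\gamma_1,\gamma_2z)j(\gamma_2,z)$ and $\det(\gamma_1\gamma_2)=\det(\gamma_1)\det(\gamma_2)$) gives
\[
	\Psi_{2k,\ell}^\z(z)\big|_{2k,z}T_n = n^{k-1}\sum_{M\in\calR_n}\ \sum_{\gamma\in\SL_2(\Z)}\psi_{2k,\ell}^\z(z)\big|_{2k,z}(\gamma M).
\]
Since $\calR_n$ is a complete set of representatives for the right cosets $\SL_2(\Z)\backslash\calM_n$, the map $(\gamma,M)\mapsto\gamma M$ is a bijection from $\SL_2(\Z)\times\calR_n$ onto $\calM_n$, so the double sum equals $n^{k-1}\sum_{M\in\calM_n}\psi_{2k,\ell}^\z(z)|_{2k,z}M$. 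For the left-hand side, after interchanging the two summations in \eqref{eqn:PsiTndef} and again using multiplicativity of the slash action, one gets $n^{k-1}\sum_{M\in\calL_n}\sum_{\gamma\in\SL_2(\Z)}\psi_{2k,\ell}^\z(z)|_{2k,z}(M\gamma)$; because $\calL_n$ represents the left cosets $\calM_n/\SL_2(\Z)$, the map $(M,\gamma)\mapsto M\gamma$ is a bijection $\calL_n\times\SL_2(\Z)\to\calM_n$, so this also equals $n^{k-1}\sum_{M\in\calM_n}\psi_{2k,\ell}^\z(z)|_{2k,z}M$. The two sides thus agree.

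The one point that needs justification is that all these manipulations of infinite sums are legitimate, i.e. that everything converges absolutely and locally uniformly on $\H$ minus the $\SL_2(\Z)$-orbit of $\z$. Here I would note that $\calM_n$ is the disjoint union of the finitely many cosets $\SL_2(\Z)M$ with $M\in\calR_n$, so $\sum_{M'\in\calM_n}|\psi_{2k,\ell}^\z(z)|_{2k,z}M'|$ is a finite sum of series of the form $\sum_{\gamma\in\SL_2(\Z)}|\psi_{2k,\ell}^\z(z)|_{2k,z}\gamma M|$. Each of these is majorized, away from the orbit of $\z$, by a constant times $\sum_{\gamma\in\SL_2(\Z)}\Im(\gamma z)^{k}$, exactly as in Petersson's proof of convergence of $\Psi_{2k,\ell}^\z$ itself (the factor $\psi_{2k,\ell}^\z|_{2k,z}M$ is a seed of the same shape: a weight $2k$ function that is $O(\Im(z)^{-k})$ with its $X_\z$-factor bounded away from $0$ and $\infty$ on the region in question), and $\sum_{\gamma}\Im(\gamma z)^{k}$ converges locally uniformly since $k\ge2>1$. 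Granting this, every interchange above — expanding the slash of a Poincar\'e series term by term and reindexing by the coset bijections — is valid, and as a byproduct one sees that $\Psi_{2k,\ell,n}^\z$ is independent of the choice of $\calL_n$, which is the well-definedness claimed after \eqref{eqn:PsiTndef}.

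I expect the only place requiring genuine care to be the coset bookkeeping: keeping straight that $\calR_n$ parametrizes the \emph{right} cosets $\SL_2(\Z)\backslash\calM_n$ while $\calL_n$ parametrizes the \emph{left} cosets $\calM_n/\SL_2(\Z)$, and verifying that in each case the relevant product map really is a bijection onto the full set $\calM_n$. No new idea is needed beyond this and the standard absolute-convergence input, which reduces to the convergence of $\Psi_{2k,\ell}^\z$ since $\calM_n$ splits into finitely many $\SL_2(\Z)$-cosets.
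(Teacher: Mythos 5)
Your proof is correct and is essentially the paper's argument spelled out: the paper's one-line proof (``swapping the left coset representatives $\mathcal{L}_n$ with right coset representatives $\mathcal{R}_n$ and combining with \eqref{eqn:Tgamma} and \eqref{eqPsi}'') is exactly your observation that both double sums unfold to $n^{k-1}\sum_{M\in\mathcal{M}_n}\psi_{2k,\ell}^{\mathfrak{z}}(z)|_{2k,z}M$ via the two coset bijections. Your added absolute-convergence justification is a sound (and welcome) elaboration of a point the paper leaves implicit.
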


\begin{proof}
	Swapping the left coset representatives $\calL_n$ appearing in \eqref{eqn:PsiTndef} with right coset representatives $\calR_n$ and combining with \eqref{eqn:Tgamma} and \eqref{eqPsi} gives the result.
\end{proof}

We  have the following relation between $\Psi_{2k,\ell}^\z$ and $\Psi_{2k,\ell,n}^\z$.

\begin{lemma}\label{lem:PsiTnRel}
	For $k\in\N$ with $k\geq 2$, $\ell\in\Z$, and $n\in\N$, we have
	\[
		n^{2k+\ell}\left(\y^{2k+\ell}\Psi_{2k,\ell}^\z(z)\right)\Big|_{-2k-2\ell,\z} T_n = \y^{2k+\ell}\Psi_{2k,\ell,n}^\z(z).
	\]
\end{lemma}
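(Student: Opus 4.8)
The plan is to compute both sides directly as double sums over $\SL_2(\Z)$ and over coset representatives of $\calM_n$, and to match them by a bijective change of variables on matrices of determinant $n$. By \Cref{lem:PsinPsiTn}, the right-hand side is $\y^{2k+\ell}\,\Psi_{2k,\ell}^{\z}(z)\big|_{2k,z}T_n$, so it suffices to understand how applying $T_n$ in the variable $z$ to the seed $\psi_{2k,\ell}^{\z}(z)=(z-\ol\z)^{-2k}X_\z(z)^\ell$ compares with applying $T_n$ in the variable $\z$ to $\y^{2k+\ell}\psi_{2k,\ell}^\z(z)$. The key observation is the symmetry of the seed: writing out $X_\z(z)=\frac{z-\z}{z-\ol\z}$, one has the identity
\[
	\y^{2k+\ell}(z-\ol\z)^{-2k}X_\z(z)^\ell = \y^{2k+\ell}\,\frac{(z-\z)^\ell}{(z-\ol\z)^{2k+\ell}},
\]
which is (up to elementary factors involving $y=\Im(z)$ and $\y=\Im(\z)$) symmetric under interchanging the roles of $z$ and $\z$ with an appropriate sign and weight bookkeeping. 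Concretely, I would first record a clean transformation law: for $M=\pmat{a&b\\0&d}\in\GL_2^+(\Q)$ acting in the $z$-variable, express $\psi_{2k,\ell}^\z(z)\big|_{2k,z}M$ in terms of $\psi_{2k,\ell}^{\tilde\z}(z)$ for a suitable modified point $\tilde\z$, or — more symmetrically — relate $\psi_{2k,\ell}^{\z}\big|_{2k,z}M$ and $\big(\y^{2k+\ell}\psi_{2k,\ell}^\z\big)\big|_{-2k-2\ell,\z}M'$ for the corresponding matrix $M'$ of the same determinant acting on $\z$.

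The concrete steps I would carry out, in order, are: (i) using \eqref{eqn:Tgamma}, write $\Psi_{2k,\ell}^{\z}(z)\big|_{2k,z}T_n = n^{k-1}\sum_{\gamma\in\SL_2(\Z)}\sum_{M\in\calR_n}\psi_{2k,\ell}^{\z}(z)\big|_{2k,z}M\gamma$ and similarly expand $\big(\y^{2k+\ell}\Psi_{2k,\ell}^\z(z)\big)\big|_{-2k-2\ell,\z}T_n$ using that $\Psi_{2k,\ell}^\z$ is itself a sum over $\SL_2(\Z)$ acting in $z$; (ii) for a fixed pair $(M,\gamma)$ with $M\in\calM_n$, prove the pointwise identity
\[
	\psi_{2k,\ell}^{\z}(z)\big|_{2k,z}M = \y^{-2k-\ell}\big(\y^{2k+\ell}\psi_{2k,\ell}^{\cdot}(z)\big)\big|_{-2k-2\ell,\z}\big(\det(M)\,M^{-\top}\big)
\]
or whatever the correct conjugated matrix turns out to be, by a brute-force substitution into $(z-\z)^\ell(z-\ol\z)^{-2k-\ell}$ and tracking the automorphy factors in $\z$; (iii) sum over a set of representatives, noting that $M\mapsto \det(M)M^{-\top}$ (transpose-inverse times determinant, i.e. the adjugate) is a bijection on $\calM_n$ that interchanges left and right cosets and interchanges the representative sets $\calL_n$ and $\calR_n$ introduced before \eqref{eqn:PsiTndef}; (iv) collect the powers of $n$: the factor $n^{2k+\ell}$ on the left, the $n^{k-1}$ in each Hecke sum, and any determinant powers from the slash operators should all cancel to leave exactly $\y^{2k+\ell}\Psi_{2k,\ell,n}^\z(z)$ via \Cref{lem:PsinPsiTn}.

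The main obstacle will be step (ii): getting the matrix conjugation and the exact power of $\y$ (and of $n=\det M$) correct, since the two sides use different weights ($2k$ versus $-2k-2\ell$) and different variables, so the automorphy factors $(cz+d)$ versus $(c\z+d)$ must be disentangled carefully — in particular one must verify that the ``extra'' factor $\y^{2k+\ell}$ is precisely what converts a weight-$2k$ slash in $z$ into a weight-$(-2k-2\ell)$ slash in $\z$ for the symmetric kernel $\frac{(z-\z)^\ell}{(z-\ol\z)^{2k+\ell}}$. I expect this to reduce to the elementary facts that $\Im(Mz)=\det(M)\,y/|cz+d|^2$ and that for $M=\pmat{a&b\\0&d}$ one has $Mz-\z = \frac{(az+b)-(cz+d)\z}{\,cz+d\,} = \frac{1}{d}\big(a z+b-d\z\big)$, which exposes the bilinear form $az\z'+\cdots$ that is manifestly symmetric in $(z,\z)$ up to the transpose of $M$. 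Once the kernel identity is pinned down, the rest is bookkeeping and a bijection of cosets, and the identification with $\Psi_{2k,\ell,n}^\z$ via \Cref{lem:PsinPsiTn} is immediate.
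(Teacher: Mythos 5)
Your plan is correct and is essentially the paper's own proof: the paper likewise expands both Hecke operators over explicit upper-triangular coset representatives and matches terms via the seed symmetry $\y^{2k+\ell}\,\psi_{2k,\ell}^{\z}(z)\big|_{2k,z}M=\big(\y^{2k+\ell}\psi_{2k,\ell}^{\z}(z)\big)\big|_{-2k-2\ell,\z}\adj(M)$, with the change of variables $r\mapsto n/r$ (and $j\mapsto-j$) playing exactly the role of your adjugate bijection between $\calL_n$ and $\calR_n$. The one detail to fix in step (ii) is that the correct matrix is the adjugate $\det(M)M^{-1}$ rather than $\det(M)M^{-\top}$ (the latter is lower triangular for upper-triangular $M$, so it would not respect the coset representatives); with that choice the determinant and $n$-power bookkeeping closes as you predict.
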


\begin{proof}
	By \eqref{eqn:Tmdef}, plugging in \eqref{eqn:Udef} and \eqref{eqn:Vdef} to evaluate the $U$- and $V$-operators, we have
	\begin{equation}\label{eqn:Heckezet}
		\left(\y^{2k+\ell}\Psi_{2k,\ell}^\z(z)\right)\big|_{-2k-2\ell,\z}T_n = n^{-2k-\ell-1} \y^{2k+\ell} \sum_{r\mid n} r^{2k} \sum_{j=0}^{\frac nr-1} \Psi_{2k,\ell}^{\frac rn(r\z+j)}(z).
	\end{equation}
	On the other hand, we may rewrite $\y^{2k+\ell}$ times the seed from \eqref{eqn:PsiTndef} as
	\[
		\y^{2k+\ell}n^{k-1}\sum_{M\in \mathcal{L}_{n}}\psi_{2k,\ell}^{\z}(z)\Big|_{2k,z}M = \y^{2k+\ell}\frac1n\sum_{r\mid n} r^{2k} \sum_{j=0}^{r-1} \psi_{2k,\ell}^{\z}\left(\frac{r^2z+rj}{n}\right).
	\]
	We make the change of variables $r\mapsto\frac nr$ and plug in the definition of $\psi_{2k,\ell}^\z$ to obtain 
	\begin{align*}
		&\y^{2k+\ell}\frac1n\sum_{r\mid n}\leg nr^{2k}\sum_{j=0}^{\frac nr-1} \psi_{2k,\ell}^\z\leg{\leg nr^2z-\frac nrj}{n}\\
		&\hspace{.5cm}= \y^{2k+\ell}n^{2k-1}\sum_{r\mid n} r^{-2k}\sum_{j=0}^{\frac nr-1} \left(\frac{nz-rj}{r^2}-\ol\z\right)^{-2k}X_\z\leg{nz-rj}{r^2}^\ell\\
		&\hspace{.5cm}= \y^{2k+\ell}\frac1n\sum_{r\mid n} r^{2k}\sum_{j=0}^{\frac nr-1} \left(z-\frac{r^2\ol\z+rj}{n}\right)^{-2k}X_\frac{r^2\z+rj}{n}(z)^\ell = \y^{2k+\ell}\frac1n\sum_{r\mid n} r^{2k}\sum_{j=0}^{\frac nr-1} \psi_{2k,\ell}^\frac{r^2\z+rj}{n}(z).
	\end{align*}
	Therefore, plugging this into \eqref{eqn:PsiTndef}, using \eqref{eqPsi}, and comparing with \eqref{eqn:Heckezet}, we obtain
	\begin{align*}
		\hspace{-.2cm}\y^{2k+\ell}\Psi_{2k,\ell,n}^\z(z) &= \y^{2k+\ell}\frac1n\sum_{r\mid n} r^{2k}\sum_{j=0}^{\frac nr-1}\sum_{\g\in\SL_2(\Z)} \psi_{2k,\ell}^\frac{r^2\z+rj}{n}(z)\Big|_{2k,z}\g\\
		&= \y^{2k+\ell}\frac1n\sum_{r\mid n} r^{2k}\sum_{j=0}^{\frac nr-1} \Psi_{2k,\ell}^\frac{r^2\z+rj}{n}(z) = n^{2k+\ell}\left(\y^{2k+\ell}\Psi_{2k,\ell}^\z(z)\right) \Big|_{-2k-2\ell,\z}T_n. \qedhere
	\end{align*}
\end{proof}

We are now ready to prove Proposition \ref{prop:PsiTnBothVariable}.

\begin{proof}[Proof of Proposition \ref{prop:PsiTnBothVariable}]
Combining Lemma~\ref{lem:PsinPsiTn} and Lemma \ref{lem:PsiTnRel}, we conclude that 
\[
\Psi_{2k,\ell}^{\z}(z)\Big|_{2k,z} T_n= \Psi_{2k,\ell,n}^{\z}(z)=
\left(\frac{n}{\y}\right)^{2k+\ell} \left( \y^{2k+\ell}\Psi_{2k,\ell}^{\z}(z)\right)\Big|_{-2k-2\ell,\z} T_n. \qedhere
\]
\rm
\end{proof}

We also need the following relation between $\Psi_{2k,-\ell}^\z$ and $\calF_{2-2k,-m}$.

\begin{lemma}\label{lemPsiFourier}
	For $\ell\in\N$ and $y>\max\{\Im(\gamma\z)\: :\:\gamma\in\Gamma\}$, we have the Fourier expansion 
	\begin{equation*}
		\Psi_{2k,-\ell}^\z(z)=\frac{(-1)^{k}2^{3-2k}\pi \y^{\ell-2k}}{(\ell-1)! }\sum_{m=1}^\infty R_{2-2k}^{\ell-1}\!\left(\calF_{2-2k,-m}(\z)\right)q^m.
	\end{equation*}
\end{lemma}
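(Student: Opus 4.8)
The plan is to compute the Fourier expansion of $\Psi_{2k,-\ell}^\z(z)$ directly from its definition \eqref{eqPsi} by unfolding the sum over $\SL_2(\Z)$ against the parabolic subgroup $\Ga_\infty$. Since $\psi_{2k,-\ell}^\z(z) = (z-\ol\z)^{-2k}X_\z(z)^{-\ell}$ decays rapidly as $y\to\infty$ and $y>\max\{\Im(\g\z):\g\in\Ga\}$ ensures we are away from the poles, one can regroup the sum $\sum_{\g\in\SL_2(\Z)}$ as $\sum_{\g\in\Ga_\infty\sm\SL_2(\Z)}\sum_{n\in\Z}$ and extract Fourier coefficients by integrating against $e^{-2\pi i m x}$ over a period. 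The $m$-th Fourier coefficient will then be an integral transform of the seed $\psi_{2k,-\ell}^\z$, which one recognizes after a change of variables.

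First I would treat the case $\ell=1$ as the base case: here the seed is $(z-\ol\z)^{-2k}X_\z(z)^{-1} = (z-\ol\z)^{-2k}\frac{z-\ol\z}{z-\z} = (z-\ol\z)^{1-2k}(z-\z)^{-1}$, whose Fourier expansion in $x$ (for fixed large $y$) can be computed by contour integration / residues, producing an expansion whose coefficients involve the $M$-Whittaker function evaluated at $-4\pi m\y$ — exactly the ingredient $\mathcal{M}_{2-2k,k}(-4\pi m\y)$ appearing in $\vf_{2-2k,-m}$. Comparing with the definition of $\calF_{2-2k,-m}(\z) = \frac{1}{(2k-1)!}\sum_{\g\in\Ga_\infty\sm\SL_2(\Z)}\vf_{2-2k,-m}(\z)|_{2-2k}\g$, and matching the unfolded sum over $\Ga_\infty\sm\SL_2(\Z)$ in the $\z$-variable against the one in the $z$-variable, should yield the identity for $\ell=1$ up to the constant $(-1)^k 2^{3-2k}\pi\,\y^{1-2k}/0!$.

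Next I would handle general $\ell$ by induction, using the relation between powers of $X_\z$ and the Maass raising operator $R_{2-2k}^{\ell-1}$ in the $\z$-variable. The key point is that applying $R_{2-2k}$ (acting on functions of $\z$) to a weight $2-2k$ object raises its weight by $2$, and one checks — either by a direct computation on the Poincar\'e seed or by using \Cref{lem:raiseTm} together with the weight-lowering behaviour of $X_\z(z)^{-\ell}$ in $\z$ — that each application of $R_{2-2k}$ to $\calF_{2-2k,-m}(\z)$ corresponds on the Poincar\'e side to decreasing the exponent $-\ell$ by one (up to an explicit constant absorbed into $\frac{(-1)^k 2^{3-2k}\pi}{(\ell-1)!}$). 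Concretely, since $X_\z(z)^{-\ell}$ as a function of $\z$ is built from the automorphy factor $(\z - \bar\z)$ and $(z-\z)$, repeated raising in $\z$ introduces the factorial $(\ell-1)!$ and shifts $\y^{\ell-2k}$ appropriately.

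The main obstacle I anticipate is the bookkeeping of the precise normalizing constant and, more subtly, justifying the interchange of the sum over $\Ga_\infty\sm\SL_2(\Z)$ with the Fourier integral and with the raising operator $R_{2-2k}^{\ell-1}$ — one must verify locally uniform convergence of $\sum_{\g}\vf_{2-2k,-m}(\z)|\g$ and its $\z$-derivatives in the region $y>\max\{\Im(\g\z)\}$, which is where the condition in the hypothesis is used. A clean way around the constant-chasing is to note that both sides are meromorphic modular forms of weight $2k$ in $z$ with the same principal part at $\z$ (by the characterization recalled after \eqref{eqn:psidef}, using that $R_{2-2k}^{\ell-1}$ applied to the principal part of $\calF_{2-2k,-m}$ reproduces $X_\z(z)^{-\ell}$), hence they agree by \cite[Satz 7]{Pe2}; this reduces the whole argument to computing a single Fourier coefficient, say $m=1$, to pin down the constant.
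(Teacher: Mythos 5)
Your proposal follows essentially the same route as the paper: the paper also proves the lemma by induction on $\ell$, with the inductive step given by applying the Maass raising operator in the $\z$-variable and verifying the identity $R_{2\ell-2k,\z}\bigl(\y^{2k-\ell}X_\z(z)^{-\ell}\bigr)=\ell\,\y^{2k-\ell-1}X_\z(z)^{-\ell-1}$ on the seed, exactly as you describe. The only difference is the base case $\ell=1$, which the paper does not re-derive by unfolding but simply quotes from \cite[(3.10) and (3.11)]{BKRamanujanAll}; your sketch of that computation (and of the convergence issues when interchanging the raising operator with the Poincar\'e sum) is a reasonable account of what that citation contains.
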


\begin{proof}
	We proceed by induction. The case $\ell=1$ is \cite[(3.10) and (3.11)]{BKRamanujanAll}.
	
	Now assume that the lemma holds for some $\ell\in\N$. We multiply both sides of the lemma by $\y^{2k-\ell}$ and apply raising in weight $2\ell-2k$ in $\z$ (for $y>\im(\g\z)$ for all $\g\in\SL_2(\Z)$), yielding
	\begin{multline*}
		\sum_{\gamma\in\SL_2(\Z)}\left( (z-\overline{\z})^{-2k} R_{2\ell-2k,\z} \left(\y^{2k-\ell} X_{\z}(z)^{-\ell}\right)\right) \Big|_{2k,z}\gamma\\
		=\frac{(-1)^{k} 2^{3-2k} \pi}{(\ell-1)!} \sum_{m=1}^{\infty}R_{2-2k}^{\ell}(\mathcal{F}_{2-2k,-m}(\z)) e^{2\pi i mz}.
	\end{multline*}
	We then compute 
	\begin{equation*}
		R_{2\ell-2k,\z}\left(\y^{2k-\ell}X_\z(z)^{-\ell}\right) = \frac{\y^{2k-\ell-1}\left(z-\ol\z\right)^\ell}{(z-\z)^{\ell+1}} ((2k-\ell)(z-\z)+2i\ell\y+(2\ell-2k)(z-\z)),
	\end{equation*}
	where the first term comes from differentiating $\y^{2k-\ell}$, the second comes from differentiating $(z-\z)^{-\ell}$, and the third comes from the term $\frac{2\ell-2k}{\y}$.
	We then rewrite the right-hand side as
	\[
		\ell \frac{\y^{2k-\ell-1} (z-\overline{\z})^{\ell+1}}{(z-\z)^{\ell+1} }=\ell \y^{2k-\ell-1} X_{\z}(z)^{-\ell-1}.
	\]
	Hence
	\[
		\ell \y^{2k-\ell-1} \Psi_{2k,-\ell-1}^{\z}(z) = \frac{ (-1)^{k} 2^{3-2k} \pi}{(\ell-1)! }\sum_{m=1}^\infty R_{2-2k}^{\ell}\!\left(\calF_{2-2k,-m}(\z)\right)q^m.
	\]
	Dividing both sides by $\ell\y^{2k-\ell-1}$ then yields the claim for $\ell+1$, completing the proof.
\end{proof}

\section{Proof of Theorem \ref{thmIso}}\label{sec:mainthm}

Before proving \Cref{thmIso}, we use \Cref{lemPsiFourier} to extend the definition of $\calG_g(z,\z)$.

\begin{lemma}\label{lem:Ggcontinue}
	Let $k\in\N$ with $k\ge2$. For $g\in R_{2-2k}^{\ell-1}(H_{2-2k}^{\cusp})$ and $\z\in\H$, the function $z\mapsto\calG_g(z,\z)$ converges for $y$ sufficiently large (depending on $\z$) and has a meromorphic continuation to $\H$.
\end{lemma}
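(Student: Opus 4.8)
The plan is to reduce everything to \Cref{lemPsiFourier}. That lemma tells us that for $y$ larger than $\max\{\Im(\gamma\z):\gamma\in\SL_2(\Z)\}$, the elliptic Poincar\'e series $\Psi_{2k,-\ell}^\z(z)$ has the Fourier expansion
\[
	\Psi_{2k,-\ell}^\z(z)=\frac{(-1)^{k}2^{3-2k}\pi \y^{\ell-2k}}{(\ell-1)!}\sum_{m=1}^\infty R_{2-2k}^{\ell-1}\!\left(\calF_{2-2k,-m}(\z)\right)q^m.
\]
Comparing this with the definition \eqref{eqn:Ggdef} of $\calG_g(z,\z)$, one sees that if $g=R_{2-2k}^{\ell-1}(\calF_{2-2k,-m})$ is the raised harmonic Maass--Poincar\'e series attached to a single index $m$, then $g(\z)\big|_{2\ell-2k}T_1 = g(\z)$, so the only discrepancy between $\calG_g(z,\z)$ and a constant multiple of $\Psi_{2k,-\ell}^\z(z)$ is the absence of the factor $m^{2k-\ell}$ inside the sum over $m$. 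First I would dispose of this by applying a Hecke operator in $z$: by \Cref{prop:PsiTnBothVariable} and \Cref{lem:raiseTm}, Hecke operators in the variable $z$ act on the Fourier expansion of $\Psi_{2k,-\ell}^\z$ essentially as Hecke operators in $\z$ on the coefficients $R_{2-2k}^{\ell-1}(\calF_{2-2k,-m}(\z))$, up to explicit powers of $m$ and $\y$; iterating and taking suitable linear combinations lets one produce the coefficient weight $m^{2k-\ell}g(\z)\big|_{2\ell-2k}T_m$. More directly, one observes that $\calG_{R^{\ell-1}(\calF_{2-2k,-m})}(z,\z)$ is, up to the explicit nonzero constant and the power of $\y$ from \Cref{lemPsiFourier}, exactly $\Psi_{2k,-\ell,m}^\z(z)=\Psi_{2k,-\ell}^\z(z)\big|_{2k,z}T_m$ (using \Cref{lem:PsinPsiTn} and matching Fourier coefficients via \Cref{lemPsiFourier} and \Cref{lem:raiseTm}).

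Next I would establish the convergence and meromorphic continuation. Since $\{\calF_{2-2k,-m}:m\in\N\}$ is a basis of $H_{2-2k}^{\cusp}$ and raising is linear, every $g\in R_{2-2k}^{\ell-1}(H_{2-2k}^{\cusp})$ is a \emph{finite} linear combination $g=\sum_{m} a_m R_{2-2k}^{\ell-1}(\calF_{2-2k,-m})$, so by linearity it suffices to treat $g=R_{2-2k}^{\ell-1}(\calF_{2-2k,-m})$ for a single $m$. For such $g$, the previous paragraph identifies $\calG_g(z,\z)$ (for $y$ large, where the Fourier expansion in \Cref{lemPsiFourier} is valid) with a nonzero constant times $\y^{\text{(explicit)}}\,\Psi_{2k,-\ell}^\z(z)\big|_{2k,z}T_m$. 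Each $\Psi_{2k,-\ell}^\z\big|_{2k}T_m$ is a meromorphic cusp form of weight $2k$ on $\SL_2(\Z)$ (it lies in $\bbS_{2k,-\ell}^\z$), hence is a meromorphic function on all of $\H$; the initial series defining $\calG_g(z,\z)$ merely represents this meromorphic function on the half-plane of convergence $\{y>c(\z)\}$, and the function itself provides the desired continuation to $\H$. Thus $z\mapsto\calG_g(z,\z)$ converges for $y$ sufficiently large (depending on $\z$, via the bound $\max\{\Im(\gamma\z):\gamma\}$ together with the $T_m$-translates of $\z$) and continues meromorphically to $\H$.

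The main obstacle I anticipate is bookkeeping rather than conceptual: carefully tracking the powers of $m$, the powers of $\y$, and the precise constants so that the coefficient $m^{2k-\ell}g(\z)\big|_{2\ell-2k}T_m$ appearing in \eqref{eqn:Ggdef} is matched exactly — in particular reconciling the Hecke normalization in \eqref{eqn:Tmdef} with the factor $n^{k-1}$ in \eqref{eqn:PsiTndef} and the weight-shift factor $(n/\y)^{2k+\ell}$ from \Cref{prop:PsiTnBothVariable} and \Cref{lem:PsiTnRel}. A secondary point to handle with care is the precise region of convergence: the series in \eqref{eqn:Ggdef} is a sum over $m$ of $T_m$-translates, so one must check that the growth of $g(\z)\big|T_m$ in $m$ (controlled by the exponential decay $e^{-2\pi m y}$ of $q^m$ against the polynomial-in-$m$ Fourier coefficients of the raised Poincar\'e series) indeed forces convergence once $y$ exceeds an explicit bound depending on $\z$; this is where \Cref{lemPsiFourier}, which already encodes exactly this Fourier expansion, does the real work.
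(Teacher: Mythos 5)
Your proposal is correct and takes essentially the same route as the paper: decompose $g$ into a finite linear combination of the raised Maass--Poincar\'e series $R_{2-2k}^{\ell-1}(\calF_{2-2k,-n})$, use \Cref{lemPsiFourier} together with the Hecke commutation relations (\Cref{lem:raiseTm}, \eqref{eqn:FTm}, \Cref{lem:PsinPsiTn}, \Cref{lem:PsiTnRel}) to identify $\calG_g(z,\z)$ with an explicit finite linear combination of Hecke translates of $\y^{2k-\ell}\Psi_{2k,-\ell}^{\z}(z)$, and conclude the continuation from the meromorphy of these finitely many Poincar\'e series. The only cosmetic difference is that you normalize to a single $\calF_{2-2k,-m}$ and land on Hecke operators in $z$ (your key identity is exactly the paper's \eqref{eqn:varphiFr}), whereas the paper's proof reduces to $\calF_{2-2k,-1}|_{2-2k}T_n$ and expresses the answer via Hecke operators in $\z$ as in \eqref{eqn:gsumPsirewrite}; the two forms are equivalent by \Cref{prop:PsiTnBothVariable}.
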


\begin{proof}
	Choose $F\in H_{2-2k}^{\operatorname{cusp}}$ such that $g=R_{2-2k}^{\ell-1}(F)$. Since $\{\mathcal{F}_{2-2k,-n}:n\in\N\}$ is a basis (see Subsection~\ref{sec:harmonic}) for the space $H_{2-2k}^{\operatorname{cusp}}$, there exist $c(n)\in\C$ and $n_0\in\N$ such that
	\[
		F=\sum_{n=1}^{n_0} c(n) \mathcal{F}_{2-2k,-n}.
	\]
	By comparing principal parts, we have 
	\begin{equation}\label{eqn:FTm}
		n^{1-2k} \mathcal{F}_{2-2k,-n}=\mathcal{F}_{2-2k,-1}\big|_{2-2k}T_n.
	\end{equation}
	Thus we obtain 
	\[
		F=\sum_{n=1}^{n_0} c(n) n^{2k-1} \mathcal{F}_{2-2k,-1}\big|_{2-2k}T_n.
	\]
	Therefore we obtain
	\[
		g=\sum_{n=1}^{n_0} c(n) n^{2k-1} R_{2-2k}^{\ell-1}\left(\mathcal{F}_{2-2k,-1}\big|_{2-2k}T_n\right).
	\]
	Plugging this into \eqref{eqn:Ggdef}, for $y$ sufficiently large (depending on $\z$, where we use the convergence from Lemma \ref{lemPsiFourier}) we have
	\begin{equation}\label{eqn:gTm} 
		\mathcal{G}_{g}(z,\z)= \sum_{m=1}^{\infty} m^{2k-\ell}\sum_{n=1}^{n_0}c(n) n^{2k-1} R_{2-2k}^{\ell-1}\left(\mathcal{F}_{2-2k,-1}(\z)\big|_{2-2k} T_n\right)\big|_{2\ell-2k} T_m q^m. 
	\end{equation}
	We then use Lemma \ref{lem:raiseTm} $\ell-1$ times to rewrite the right-hand side of \eqref{eqn:gTm} as 
	\[
		\sum_{m=1}^{\infty} m^{2k-\ell}\sum_{n=1}^{n_0}c(n) n^{2k-\ell} R_{2-2k}^{\ell-1}\left(\mathcal{F}_{2-2k,-1}(\z)\right)\big|_{2\ell-2k} T_n \circ T_m q^m. 
	\]
	Since Hecke operators commute (see for instance \cite[equation (42)]{Zagier123}), we can rewrite this as 
	\[
		\sum_{m=1}^{\infty} m^{2k-\ell}\sum_{n=1}^{n_0}c(n) n^{2k-\ell} R_{2-2k}^{\ell-1}\left(\mathcal{F}_{2-2k,-1}(\z)\right)\big|_{2\ell-2k} T_m q^m \big|_{2\ell-2k,\z} T_n. 
	\]
	We next interchange
	\rm
	the sums, 
	which is valid due to the growth conditions of 
	\[
		\left|R_{2-2k}^{\ell-1}\left(\mathcal{F}_{2-2k,-1}(\z)\right)\big|_{2\ell-2k} T_m\right|
	\]
	implied by the convergence of the series in \cite[Theorem 3.1]{Fay}.
	Therefore \eqref{eqn:gTm} becomes
	\begin{gather}\label{eqn:gTm2}
		\calG_g(z,\z) = \sum_{n=1}^{n_0} c(n)n^{2k-\ell}\left(\sum_{m=1}^\infty m^{2k-\ell}R_{2-2k}^{\ell-1}(\calF_{2-2k,-1}(\z))\big|_{2\ell-2k}T_mq^m\right) \Big|_{2\ell-2k,\z}T_n.
	\end{gather}
	We use Lemma \ref{lem:raiseTm} again $\ell-1$ times to rewrite the sum inside the parentheses on the right-hand side of \eqref{eqn:gTm2} as 
	\[
		\sum_{m=1}^{\infty} m^{2k-1} R_{2-2k}^{\ell-1}\left(\mathcal{F}_{2-2k,-1}(\z)\big|_{2-2k} T_m\right) q^m.
	\]
	We next employ \eqref{eqn:FTm} again followed by \Cref{lemPsiFourier} to rewrite this as
	\[
		\sum_{m=1}^{\infty} R_{2-2k}^{\ell-1}\left(\mathcal{F}_{2-2k,-m}(\z)\right) q^m=\frac{(-1)^{k}2^{2k-3} (\ell-1)!}{\pi } \y^{2k-\ell} \Psi_{2k,-\ell}^{\z}(z).
	\]
	Plugging this into \eqref{eqn:gTm2}, we obtain
	\begin{equation}\label{eqn:gsumPsirewrite}
		\mathcal{G}_g(z,\z) = \frac{(-1)^{k}2^{2k-3} (\ell-1)!}{\pi} \sum_{n=1}^{n_0}c(n) n^{2k-\ell} \left(\y^{2k-\ell}\Psi_{2k,-\ell}^{\z}(z)\right)\big|_{2\ell-2k,\z}T_n.
	\end{equation}
	Since $\Psi_{2k,-\ell}^{\z}$ is meromorphic and well-defined for all $z\in\H$ and the sum is finite, we obtain a meromorphic continuation for $\mathcal{G}_{g}(z,\z)$. 
\end{proof}

\rm
To prove \Cref{thmIso},we next show that we have Hecke-stable subspaces of $\bbS_{2k,-\ell}^{\z}$.
\rm

\begin{lemma}\label{lem:HeckeEquivariant}
	The spaces $\GZ_{2k,-\ell}^\z$ and $\GZp{2k}{-\ell}{\z}$ are Hecke-stable subspaces of $\bbS_{2k,-\ell}^\z$.
\end{lemma}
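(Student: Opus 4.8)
The plan is to establish three things: that $\GZ_{2k,-\ell}^\z$ and $\GZp{2k}{-\ell}{\z}$ are contained in $\bbS_{2k,-\ell}^\z$, that they are subspaces (i.e.\ closed under linear combinations), and that they are stable under all Hecke operators $T_n$. The key tool is the formula \eqref{eqn:gsumPsirewrite} from \Cref{lem:Ggcontinue}, which expresses $\calG_g(z,\z)$, for $g=R_{2-2k}^{\ell-1}(F)$ with $F\in H_{2-2k}^{\cusp}$, as a finite linear combination
\[
	\mathcal{G}_g(z,\z) = \frac{(-1)^{k}2^{2k-3} (\ell-1)!}{\pi} \sum_{n=1}^{n_0}c(n) n^{2k-\ell} \left(\y^{2k-\ell}\Psi_{2k,-\ell}^{\z}(z)\right)\big|_{2\ell-2k,\z}T_n
\]
of the functions $(\y^{2k-\ell}\Psi_{2k,-\ell}^{\z}(z))|_{2\ell-2k,\z}T_n$. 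By \Cref{prop:PsiTnBothVariable}, each such function is, up to the explicit scalar $(\y/n)^{2k-\ell}\cdot\y^{-\ell-... }$ — more precisely by rearranging that identity — a scalar multiple of $\Psi_{2k,-\ell}^\z(z)|_{2k,z}T_n$, hence lies in $\bbS_{2k,-\ell}^\z=\langle\Psi_{2k,-\ell}^\z|_{2k}T_m:m\in\N\rangle$. Since $\GZ_{2k,-\ell}^\z$ is the set of \emph{all} $\calG_g(z,\z)$ with $g$ ranging over the linear space $R_{2-2k}^{\ell-1}(M_{2-2k}^!)$, and $g\mapsto\calG_g$ is visibly linear in $g$, it follows at once that $\GZ_{2k,-\ell}^\z$ is a linear subspace of $\bbS_{2k,-\ell}^\z$; the same argument with $S_{2-2k}^!$ in place of $M_{2-2k}^!$ gives the statement for $\GZp{2k}{-\ell}{\z}$. (Here one uses that $M_{2-2k}^!\subseteq H_{2-2k}^{\cusp}$ and $S_{2-2k}^!\subseteq H_{2-2k}^{\cusp}$, so \Cref{lem:Ggcontinue} applies.)

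For Hecke-stability, I would apply $|_{2k,z}T_m$ to $\calG_g(z,\z)$ and show the result is again of the form $\calG_h(z,\z)$ with $h\in R_{2-2k}^{\ell-1}(M_{2-2k}^!)$ (resp.\ $S_{2-2k}^!$). Starting from the defining series \eqref{eqn:Ggdef}, $\calG_g(z,\z)=\sum_{m\ge1}m^{2k-\ell}g(\z)|_{2\ell-2k}T_m\,q^m$, apply $T_m$ in $z$: since the $q$-expansion is being acted on, and Hecke operators on $q$-expansions of holomorphic-type pieces multiply/rearrange coefficients, I expect $\calG_g(z,\z)|_{2k,z}T_m$ to reorganize into $\sum_{r\mid m}r^{2k-1}$-type sums that, using the commutativity of the $T_m$'s in the weight $2\ell-2k$ variable $\z$ (as in the proof of \Cref{lem:Ggcontinue}), collapse to a single $\calG_h$ with $h$ a linear combination of $g|_{2\ell-2k}T_r$. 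The crucial point is that $M_{2-2k}^!$ (resp.\ $S_{2-2k}^!$) is Hecke-stable in weight $2-2k$, and the raising operator intertwines Hecke operators up to scalars by \Cref{lem:raiseTm}, so $R_{2-2k}^{\ell-1}(M_{2-2k}^!)$ is stable under the weight $2\ell-2k$ Hecke operators; hence $h\in R_{2-2k}^{\ell-1}(M_{2-2k}^!)$ and $\calG_h\in\GZ_{2k,-\ell}^\z$. Alternatively, and perhaps more cleanly, one can argue entirely on the $\Psi$-side: combining \eqref{eqn:gsumPsirewrite} with \Cref{prop:PsiTnBothVariable} shows $\GZ_{2k,-\ell}^\z$ equals the span of $\{\Psi_{2k,-\ell}^\z|_{2k,z}T_n : n\in N_g\}$ over a suitable index set, and then $|_{2k,z}T_m$ sends such a spanning element to $\Psi_{2k,-\ell}^\z|_{2k,z}(T_n\circ T_m)$, which by the standard Hecke multiplication $T_nT_m=\sum_{r\mid\gcd(n,m)}r^{2k-1}T_{nm/r^2}$ is again a combination of elements of the same shape; one must then check the index set is preserved under this operation, which reduces to the Hecke-stability of $M_{2-2k}^!$ via \Cref{lemPsiFourier}.

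The main obstacle is bookkeeping: one must track precisely how the index set $\{n : c(n)\ne0\}$ arising from the $H_{2-2k}^{\cusp}$-expansion of $F$ transforms when $F$ (equivalently $g$) is replaced by $F|_{2-2k}T_r$, and confirm that starting from $F\in M_{2-2k}^!$ (resp.\ $S_{2-2k}^!$) one stays inside the corresponding space after applying Hecke operators — in other words that the description of $\GZ_{2k,-\ell}^\z$ as ``$\calG_g$ with $g\in R_{2-2k}^{\ell-1}(M_{2-2k}^!)$'' is genuinely Hecke-closed rather than merely $\calG$ of a Hecke-closed set under the $\z$-variable action. This is handled by the combination of: (i) $M_{2-2k}^!$ and $S_{2-2k}^!$ are Hecke-stable, a classical fact; (ii) \Cref{lem:raiseTm}, which transfers this stability through $R_{2-2k}^{\ell-1}$ up to explicit scalars; and (iii) the commutativity of Hecke operators, used to move the action from the weight $2k$ variable $z$ to the weight $2\ell-2k$ variable $\z$ exactly as in the proof of \Cref{lem:Ggcontinue}. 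Once these are assembled, both the subspace and Hecke-stability claims follow, and the containment in $\bbS_{2k,-\ell}^\z$ is immediate from \eqref{eqn:gsumPsirewrite} together with \Cref{prop:PsiTnBothVariable}.
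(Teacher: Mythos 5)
Your proposal is correct and follows essentially the same route as the paper: containment and the subspace property come from the finite $\Psi$-expansion \eqref{eqn:gsumPsirewrite} converted to the $z$-variable Hecke operators (the paper uses \Cref{lem:PsinPsiTn,lem:PsiTnRel}, which together constitute \Cref{prop:PsiTnBothVariable}), and Hecke-stability from the identity $\calG_g(z,\z)|_{2k,z}T_r=r^{2k-\ell}\calG_{g|_{2\ell-2k}T_r}(z,\z)$ combined with the Hecke-stability of $M^!_{2-2k}$ and $S^!_{2-2k}$ transferred through $R_{2-2k}^{\ell-1}$ via \Cref{lem:raiseTm}. The paper in fact produces the single scalar multiple $r^{2k-\ell}\,g|_{2\ell-2k}T_r$ rather than a general linear combination of the $g|_{2\ell-2k}T_r$, which only sharpens the step you sketched.
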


\begin{proof}
	We first show that $\GZ_{2k,-\ell}^\z$ and $\GZp{2k}{-\ell}{\z}$ are subspaces of $\S_{2k,-\ell}^\z$. Recalling \eqref{eqGZ} and \eqref{eqGZ2}, both are vector spaces due to the linearity in $g$ in the definition \eqref{eqn:Ggdef} and the fact that $R_{2-2k}^{\ell-1}(M_{2-2k}^!)$ and $R_{2-2k}^{\ell-1}(S_{2-2k}^!)$ are vector spaces. Noting that $\GZp{2k}{-\ell}{\z}\subseteq\GZ_{2k,-\ell}^\z$ because $R_{2-2k}^{\ell-1}(S_{2-2k}^!)\subseteq R_{2-2k}^{\ell-1}(M_{2-2k}^!)$, it remains to show $\GZ_{2k,-\ell}^\z\subseteq\S_{2k,-\ell}^{\z}$. For this, let $\calG_g(z,\z)$ with $g\in R_{2-2k}^{\ell-1}(M_{2-2k}^!)$. We use \Cref{lem:PsiTnRel} 
	followed by \Cref{lem:PsinPsiTn} 
	to rewrite \eqref{eqn:gsumPsirewrite} as 
	\begin{equation}\label{eqn:GgPsi}
		\mathcal{G}_g(z,\z) =\frac{(-1)^{k}2^{2k-3} (\ell-1)!}{\pi} 	\y^{2k-\ell}\sum_{n=1}^{n_0}c(n)\Psi_{2k,-\ell}^{\z}(z)\big|_{2k,z}T_n.
	\end{equation}
	Thus $\mathcal{G}_g(z,\z) \in \mathbb{S}_{2k,-\ell}^{\z}$, so $\GZp{2k}{-\ell}{\z}\subseteq\GZ_{2k,-\ell}^\z\subseteq \mathbb{S}_{2k,-\ell}^{\z}$ are subspaces.

	\rm
	Finally, to show that the subspaces are Hecke-stable, we apply $|_{2k,z}T_r$ to both sides of \eqref{eqn:GgPsi} to obtain
	\begin{equation}\label{eqn:GgTr}
		\mathcal{G}_g(z,\z)\big|_{2k,z}T_r = \frac{(-1)^{k}2^{2k-3} (\ell-1)!}{\pi} \sum_{n=1}^{n_0}c(n) \y^{2k-\ell}\Psi_{2k,-\ell}^{\z}(z)\big|_{2k,z}T_n\circ T_r.
	\end{equation}
	Since the Hecke operators commute, we may interchange them to obtain 
	\begin{equation}\label{eqn:TnTrflip}
		\y^{2k-\ell}\Psi_{2k,-\ell}^{\z}(z)\big|_{2k,z}T_n\circ T_r=\y^{2k-\ell}\Psi_{2k,-\ell}^{\z}(z)\big|_{2k,z} T_r\circ T_n.
	\end{equation}
	We then use Lemma \ref{lem:PsinPsiTn} followed by Lemma \ref{lem:PsiTnRel} (with $\ell\mapsto -\ell$) to rewrite
	\begin{equation}\label{eqn:PsiTnflip}
		\y^{2k-\ell}\Psi_{2k,-\ell}^\z(z)\big|_{2k,z}T_r = r^{2k-\ell} \left(\y^{2k-\ell}\Psi_{2k,-\ell}^\z(z)\right)\big|_{2\ell-2k,\z}T_r.
	\end{equation}
	Plugging \eqref{eqn:TnTrflip} and \eqref{eqn:PsiTnflip} into the right-hand side of \eqref{eqn:GgTr} yields
	\rm
	\[
		\mathcal{G}_g(z,\z)\big|_{2k,z}T_r=\frac{(-1)^{k}2^{2k-3} (\ell-1)!}{\pi} r^{2k-\ell} \sum_{n=1}^{n_0}c(n) \left(\y^{2k-\ell}\Psi_{2k,-\ell}^{\z}(z)\right)\big|_{2\ell-2k,\z}T_r\big|_{2k,z}T_n.
	\]
	We next interchange the Hecke operator $T_n$ in $z$ inside with $T_r$ in $\z$ to obtain
	\begin{align}
		\nonumber \mathcal{G}_g(z,\z)\big|_{2k,z}T_r &=r^{2k-\ell}\left(\frac{(-1)^{k}2^{2k-3} (\ell-1)!}{\pi} \sum_{n=1}^{n_0}c(n) \left(\y^{2k-\ell}\Psi_{2k,-\ell}^{\z}(z)\right)\big|_{2k,z}T_n\right)\big|_{2\ell-2k,\z}T_r\\
		\label{eqn:GgHecke}&\hspace{-.15cm}=r^{2k-\ell} \mathcal{G}_{g}(z,\z)\big|_{2\ell-2k,\z}T_r,
	\end{align}
	using \eqref{eqn:GgPsi} in the last step.
	\rm
	
	Plugging \eqref{eqn:GgHecke} into the definition \eqref{eqn:Ggdef} and then interchanging Hecke operators in $\z$, we obtain that 
	\begin{equation}\label{eqn:GgHeckesubscript}
		\calG_g(z,\z)\big|_{2k,z}T_r= \sum_{m=1}^\infty (mr)^{2k-\ell} g(\z)\big|_{2\ell-2k}T_r \circ T_m q^m = r^{2k-\ell}\calG_{g|_{2\ell-2k}T_r}(z,\z)
	\end{equation}
	Since $M_{2-2k}^!$ and $S_{2-2k}^!$ are both Hecke-stable, Lemma \ref{lem:raiseTm} implies that $R_{2-2k}^{\ell-1}(M_{2-2k}^!)$ and $R_{2-2k}^{\ell-1}(S_{2-2k}^!)$ are both Hecke-stable as well.
\end{proof}

For the proof of Theorem \ref{thmIso} we require certain isomorphisms.

\begin{lemma}\label{lem:HeckeWeakly}
The following spaces are isomorphic as Hecke modules,
\[
H_{2-2k}^{\operatorname{cusp}}\big\slash M^!_{2-2k}\cong S_{2k},\qquad H_{2-2k}^{\operatorname{cusp}}\big\slash S^!_{2-2k}\cong M_{2k}.
\]
\end{lemma}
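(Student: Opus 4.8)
The plan is to build the two isomorphisms from the exact sequence structure induced by the differential operators $\xi_{2-2k}$ and $D^{2k-1}$ acting on harmonic Maass forms, using that both are Hecke-equivariant up to an explicit normalization. First I would recall that a harmonic Maass form $F\in H_{2-2k}^{\cusp}$ decomposes as $F=F^++F^-$, where $\xi_{2-2k}(F)$ is the cusp form attached to the nonholomorphic part; this gives a surjective linear map $\xi_{2-2k}\colon H_{2-2k}^{\cusp}\to S_{2k}$ whose kernel is exactly $M_{2-2k}^!$ (the forms with vanishing nonholomorphic part are weakly holomorphic). Thus as vector spaces $H_{2-2k}^{\cusp}/M_{2-2k}^!\cong S_{2k}$. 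For the second isomorphism I would instead use that $F\in M_{2-2k}^!$ lies in $S_{2-2k}^!$ precisely when it has no constant term, together with the fact that the remaining ``constant-term'' direction inside $H_{2-2k}^{\cusp}/S_{2-2k}^!$ combines with $S_{2k}$ to give $M_{2k}$; concretely one checks $\dim(H_{2-2k}^{\cusp}/S_{2-2k}^!)=\dim S_{2k}+1=\dim M_{2k}$, matching the decomposition $M_{2k}=S_{2k}\oplus\C E_{2k}$.

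The key steps, in order, would be: (i) establish that $\xi_{2-2k}$ is Hecke-equivariant in the appropriate sense — the standard computation gives $\xi_{2-2k}(F|_{2-2k}T_m)=m^{2k-1}\,\xi_{2-2k}(F)|_{2k}T_m$ (up to the fixed normalization in \eqref{eqn:Tmdef}), so after rescaling the Hecke action on one side by $m^{2k-1}$ the map intertwines the actions; (ii) identify $\ker\xi_{2-2k}=M_{2-2k}^!$, which is immediate since a harmonic Maass form with zero image under $\xi$ is holomorphic, hence weakly holomorphic, and conversely, and note $M_{2-2k}^!$ is Hecke-stable; (iii) conclude the first isomorphism $H_{2-2k}^{\cusp}/M_{2-2k}^!\cong S_{2k}$ as Hecke modules, being careful to absorb the $m^{2k-1}$ twist consistently (this is the same normalization issue that appears as the $n^{2k-1}$ and $n^{2k-\ell}$ factors elsewhere, e.g. in \eqref{eqn:FTm} and the proof of \Cref{lem:Ggcontinue}); (iv) for the second isomorphism, observe $S_{2-2k}^!\subseteq M_{2-2k}^!$ is the kernel of the ``constant term'' functional $F^+\mapsto c_F^+(0)$ restricted to the weakly holomorphic part, so $H_{2-2k}^{\cusp}/S_{2-2k}^!$ surjects onto $S_{2k}$ with kernel $M_{2-2k}^!/S_{2-2k}^!\cong\C$; (v) check that this kernel is the Hecke eigenline with the eigenvalues of $E_{2k}$ (i.e. $T_m$ acts by $m^{2k-1}\sigma_{2k-1}(m)$ after the twist, matching $E_{2k}$), which pins down the extension and identifies $H_{2-2k}^{\cusp}/S_{2-2k}^!\cong M_{2k}$ as Hecke modules rather than merely as vector spaces.

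The main obstacle I anticipate is step (v): showing the short exact sequence $0\to M_{2-2k}^!/S_{2-2k}^!\to H_{2-2k}^{\cusp}/S_{2-2k}^!\to S_{2k}\to 0$ of Hecke modules actually has middle term isomorphic to $M_{2k}=S_{2k}\oplus\C E_{2k}$ and not some nonsplit extension. The cleanest route is to produce an explicit Hecke-equivariant splitting: one can send a cusp form $f\in S_{2k}$ to a canonical preimage in $H_{2-2k}^{\cusp}$ (e.g. via the Maass-Poincaré basis, matching $\calF_{2-2k,-m}$ to the Poincaré series generating $S_{2k}$, which behave uniformly under $T_m$ by \eqref{eqn:FTm}) and separately send $E_{2k}$ to a weakly holomorphic form in $M_{2-2k}^!$ whose image modulo $S_{2-2k}^!$ has constant term $1$; one must verify that the latter is, up to $S_{2-2k}^!$, a Hecke eigenvector with the right eigenvalues, which follows because the space $M_{2-2k}^!/S_{2-2k}^!$ is one-dimensional and Hecke-stable, hence automatically an eigenline, and comparing with the action on constant terms (scaling by $m^{2k-1}\sigma_{2k-1}(m)$) identifies the eigenvalue. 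Assembling these gives the Hecke-module isomorphism; the first isomorphism is then the special case obtained by further quotienting out this $E_{2k}$-line, consistent with the parallel pair of isomorphisms for $\bbS_{2k,-\ell}^\z$ in \Cref{thmIso}.
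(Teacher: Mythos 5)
Your argument is correct in outline but takes a genuinely different route from the paper. The paper's proof is essentially a two-line reduction: by \cite{BOR}, $D^{2k-1}$ is a bijection from $H_{2-2k}^{\cusp}$ onto $S_{2k}^!\cap S_{2k}^{\perp}$ commuting with the Hecke operators up to the factor $m^{1-2k}$, so the homomorphism theorem identifies the two quotients with $(S_{2k}^!\cap S_{2k}^{\perp})/D^{2k-1}(M_{2-2k}^!)$ and $(S_{2k}^!\cap S_{2k}^{\perp})/D^{2k-1}(S_{2-2k}^!)$, and the claim is then exactly \cite[Theorem 1.2]{BGKO}. You instead work directly with the shadow operator $\xi_{2-2k}$, which amounts to reproving the content of that cited result; this is more self-contained, but it forces you to confront two points the paper never touches: (a) $\xi_{2-2k}$ is conjugate-linear, so it only gives an isomorphism with the complex-conjugate Hecke module, and one must add that $S_{2k}$ and $M_{2k}$ are isomorphic to their conjugates because the Hecke action preserves the rational structure; and (b) the extension problem in your step (v). Also, your normalization in step (i) is inverted: with \eqref{eqn:Tmdef} one gets $\xi_{2-2k}(F|_{2-2k}T_m)=m^{1-2k}\,\xi_{2-2k}(F)|_{2k}T_m$, i.e., the factor $m^{2k-1}$ belongs on the other side (compare \eqref{eqn:varphiTm}); this is harmless but should be stated correctly.

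The one step that does not work as written is the ``explicit Hecke-equivariant splitting'' obtained by matching $\calF_{2-2k,-m}$ to the weight $2k$ holomorphic Poincar\'e series: the latter satisfy infinitely many linear relations inside the finite-dimensional space $S_{2k}$, whereas the $\calF_{2-2k,-m}$ are linearly independent, so that assignment is not a well-defined map $S_{2k}\to H_{2-2k}^{\cusp}$. The splitting should instead come from separation of eigenvalue systems: $M_{2-2k}^!/S_{2-2k}^!$ is a one-dimensional Hecke-stable line on which $T_m$ acts on constant terms by $\sigma_{1-2k}(m)=m^{1-2k}\sigma_{2k-1}(m)$, and since the Eisenstein system $\sigma_{2k-1}$ is not the eigenvalue system of any Hecke eigenform in $S_{2k}$, the generalized eigenspace decomposition of the finite-dimensional module $H_{2-2k}^{\cusp}/S_{2-2k}^!$ under the commuting operators $T_m$ splits off this line, and the complementary summand maps isomorphically onto $S_{2k}$. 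With that repair your argument goes through and recovers $M_{2k}=S_{2k}\oplus\C E_{2k}$ as claimed.
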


\begin{proof}
	By \cite[Theorems 1.1 and 1.2 and Lemma 2.1]{BOR} (also see \Cref{lem:raiseTm} for the normalization), $D^{2k-1}$ is an isomorphism from $H_{2-2k}^{\cusp}$ to $S_{2k}^!\cap S_{2k}^\perp$ which, by \eqref{eqn:BolsIdentity} and \Cref{lem:raiseTm}, essentially commutes with the Hecke operators in the sense that
	\[
		D^{2k-1}\left(f\big|_{2-2k}T_m\right)={m^{1-2k}} D^{2k-1}(f)\big|_{2k}T_m.
	\]
	We then consider the reduction of this map modulo $D^{2k-1}(M_{2-2k}^!)$ (resp. $D^{2k-1}(S_{2-2k}^!)$) and use the homomorphism theorem. Since $D^{2k-1}$ is a bijection from $H_{2-2k}^{\operatorname{cusp}}$ to $S_{2k}^!\cap S_{2k}^{\perp}$, the kernel of the composition of $D^{2k-1}$ followed by the reduction map to $D^{2k-1}(M_{2-2k}^!)$ (resp. $D^{2k-1}(S_{2-2k}!)$) is $M_{2-2k}^!$ (resp. $S_{2-2k}^!$). Therefore
	\begin{align*}
		H_{2-2k}^{\operatorname{cusp}}\big\slash M^!_{2-2k}&\cong \left(S_{2k}^!\cap S_{2k}^{\perp}\right)\big\slash D^{2k-1}\left(M_{2-2k}^!\right),\\
		H_{2-2k}^{\operatorname{cusp}}\big\slash S^!_{2-2k}&\cong \left(S_{2k}^!\cap S_{2k}^{\perp}\right)\big\slash D^{2k-1}\left(S_{2-2k}^!\right).
	\end{align*}
	Now using the isomorphism in \cite[Theorem 1.2]{BGKO} completes the proof.
\end{proof}

We are now ready to prove Theorem \ref{thmIso}.

\begin{proof}[Proof of \Cref{thmIso}]
	In order to prove the isomorphisms, for $k,\ell\in \N$ with $k\geq 2$, we consider the map $\varrho_{2-2k,\ell}^{\z}:H_{2-2k}^{\operatorname{cusp}}\mapsto \mathbb{S}_{2k,-\ell}^{\z}$ given by 
	\begin{equation}\label{eqn:varphidef}
		\varrho_{2-2k,\ell}^{\z}(\mathcal{F})(z):= \mathcal{G}_{R_{2-2k}^{\ell-1}(\mathcal{F})}(z,\z).
	\end{equation}
	We claim that $\p_{2-2k,\ell}^\z$ is a bijection between $H_{2-2k}^{\cusp}$ and $\S_{2k,-\ell}^\z$ and essentially commutes with the Hecke operators in the sense that
	\begin{equation}\label{eqn:varphiTm}
		\varrho_{2-2k,\ell}^{\z}(\mathcal{F})\big|_{2k,z}T_m =m^{2k-1}\varrho_{2-2k,\ell}^{\z}\left(\mathcal{F}\big|_{2-2k}T_m\right).
	\end{equation}
	We first show how the claim follows from this: Note that by definition 
	\[
		\varrho_{2-2k,\ell}^{\z}\left(M_{2-2k}^!\right)=\GZ_{2k,-\ell}^{\z}\qquad\text{ and }\qquad \varrho_{2-2k,\ell}^{\z}\left(S_{2-2k}^!\right)=\GZp{2k}{-\ell}{\z}.
	\]
	Hence if $\varrho_{2-2k,\ell}^{\z}$ is a Hecke-equivariant isomorphism from $H_{2-2k}^{\operatorname{cusp}}$ to $\bbS_{2k,-\ell}^{\z}$, then the quotient map $\varrho_{2-2k,\ell}^{\z}\Pmod{\GZ_{2k,-\ell}^\z}$ (resp. $\varrho_{2-2k,\ell}^{\z}\Pmod{\GZp{2k}{-\ell}{\z}}$) has kernel $M_{2-2k}^!$ (resp. $S_{2-2k}^!$), yielding the Hecke-equivariant isomorphisms
	\begin{equation}\label{eqn:iso}
		\mathbb{S}_{2k,-\ell}^{\z}\big\slash{\GZ}^\z_{2k,-\ell} \cong H_{2-2k}^{\operatorname{cusp}}\big\slash M_{2-2k}^!,\qquad \mathbb{S}_{2k,-\ell}^{\z}\Big\slash \GZp{2k}{-\ell}{\z} \cong H_{2-2k}^{\operatorname{cusp}}\big\slash S_{2-2k}^!.
	\end{equation}
	After establishing \eqref{eqn:iso}, the theorem immediately follows by Lemma \ref{lem:HeckeWeakly}. 

	It therefore remains to show that $\p_{2-2k,\ell}^{\z}$ is an isomorphism and satisfies \eqref{eqn:varphiTm}. Since $g\mapsto\calG_g$ is linear, it suffices to prove that it sends a basis of $H_{2-2k}^{\cusp}$ to a basis of $\S_{2k,-\ell}^\z$. From Subsection \ref{sec:harmonic} we know that $\{\calF_{2-2k,-r}:r\in\N\}$ is a basis for $H_{2-2k}^{\cusp}$, while $\{\Psi_{2k,-\ell}^\z|_{2k}T_r:r\in\N\}$ is a basis for $\S_{2k,-\ell}^\z$ by definition of the space. We claim that
	\begin{equation}\label{eqn:varphiFr}
		\varrho_{2-2k,\ell}^{\z}\left(\mathcal{F}_{2-2k,-r}\right) = \frac{(-1)^k 2^{2k-3}(\ell-1)!}{\pi} \y^{2k-\ell}\Psi_{2k,-\ell}^{\z}(z)\big|_{2k,z}T_r, 
	\end{equation}
	which then immediately implies that $\varrho_{2-2k,\ell}^{\z}$ is an isomorphism. 

	To show \eqref{eqn:varphiFr}, we plug \eqref{eqn:FTm} into Lemma \ref{lemPsiFourier} and then apply Lemma \ref{lem:raiseTm}, yielding that for $y$ sufficiently large we have 
	\begin{equation}\label{eqn:Psiexpand}
		\Psi_{2k,-\ell}^\z(z)=\frac{(-1)^{k} 2^{3-2k} \pi \y^{\ell-2k}}{(\ell-1)! }\sum_{m=1}^\infty m^{2k-\ell}R_{2-2k}^{\ell-1}\!\left(\calF_{2-2k,-1}(\z)\right)\big|_{2\ell-2k} T_mq^m.
	\end{equation}
	Combining \eqref{eqn:PsiTnflip} with \eqref{eqn:Psiexpand} and then interchanging the Hecke operators yields
	\begin{multline}\label{eqn:expansionTflip}
		\y^{2k-\ell}\Psi_{2k,-\ell}^\z(z)\big|_{2k,z}T_r\\
		= \frac{(-1)^{k} 2^{3-2k} \pi}{(\ell-1)! } r^{2k-\ell}\sum_{m=1}^\infty m^{2k-\ell}R_{2-2k}^{\ell-1}\!\left(\calF_{2-2k,-1}(\z)\right)\big|_{2\ell-2k}T_r\big|_{2\ell-2k} T_mq^m.
	\end{multline}
	We then use \Cref{lem:raiseTm} $\ell-1$ times to rewrite
	\[
		R_{2-2k}^{\ell-1}\!\left(\calF_{2-2k,-1}(\z)\right)\big|_{2\ell-2k}T_r= r^{\ell-1} R_{2-2k}^{\ell-1}\!\left(\calF_{2-2k,-1}(\z)\big|_{2-2k}T_r\right).
	\]
	The right-hand side of \eqref{eqn:expansionTflip} then becomes
	\begin{equation*}
		\frac{(-1)^{k} 2^{3-2k} \pi r^{2k-1}}{(\ell-1)! }\sum_{m=1}^\infty m^{2k-\ell}R_{2-2k}^{\ell-1}\!\left(\calF_{2-2k,-1}(\z)\big|_{2-2k}T_r\right)\big|_{2\ell-2k} T_mq^m.
	\end{equation*}
	We next use \eqref{eqn:FTm} to rewrite
	\[
		r^{2k-1}R_{2-2k}^{\ell-1}\!\left(\calF_{2-2k,-1}(\z)\big|_{2-2k}T_r\right)= R_{2-2k}^{\ell-1}\!\left(\calF_{2-2k,-r}(\z)\right).
	\]
	Plugging back into \eqref{eqn:expansionTflip} gives
	\[
		\y^{2k-\ell}\Psi_{2k,-\ell}^\z(z)\big|_{2k,z}T_r=\frac{(-1)^{k} 2^{3-2k} \pi}{(\ell-1)! }\sum_{m=1}^\infty m^{2k-\ell}R_{2-2k}^{\ell-1}\!\left(\calF_{2-2k,-r}(\z)\right)\big|_{2\ell-2k} T_mq^m.
	\]
	Comparing with the definitions \eqref{eqn:Ggdef} and \eqref{eqn:varphidef}, we obtain \eqref{eqn:varphiFr}.
	It remains to show that \eqref{eqn:varphiTm} holds. Applying \eqref{eqn:GgHeckesubscript} followed by Lemma \ref{lem:raiseTm}, we have
	\[
		\varrho_{2-2k,\ell}^{\z}(\mathcal{F})(z)\big|_{2k,z}T_m = \mathcal{G}_{R_{2-2k}^{\ell-1}(\mathcal{F})\big|_{2\ell-2k}T_m}(z,\z)=m^{2k-1}\varrho_{2-2k,\ell}^{\z}\left(\mathcal{F}\big|_{2-2k}T_m\right)(z),
	\]
	yielding \eqref{eqn:varphiTm}.
\end{proof}

We end this section with a remark about the motivation for our definition and its relation with Gross and Zagier's conjecture \cite[Conjecture 4.4, p. 317]{GrossZagier}.

\begin{remark}\label{rem:connection}
	Let $(\l_r)_r$ be a sequence of complex numbers, only finitely many of which are $\ne0$. Note that
	\[
		\sum_{r=1}^{\infty}\lambda_r r^{2k-1}\mathcal{F}_{2-2k,-1}|_{2-2k}T_r=\sum_{r=1}^{\infty}\lambda_r\mathcal{F}_{2-2k,-r}\in M_{2-2k}^!
	\]
	if and only if there exists a weakly holomorphic modular form with principal part $\sum_{r=1}^\infty\l_rq^{-r}$ at $i\infty$. Applying Riemann--Roch (for example, see \cite[Satz 1]{Pe1}), such a weakly holomorphic modular form exists if and only if for every $f\in S_{2k}$
	\[
		\sum_{r=1}^{\infty} \lambda_r c_f(r)=0.
	\]
	Hence \eqref{eqn:GrossZagierCondition} is equivalent to
	\[
		g=g_{\lambda}:=\sum_{r=1}^{\infty}\lambda_rR_{2-2k}^{\ell-1}\left(\mathcal{F}_{2-2k,-r}\right)\in R_{2-2k}^{\ell-1}\left(M_{2-2k}^!\right), 
	\]
	and \eqref{eqn:GgPsi} constructs from such $g$ a sum $\mathcal{G}_{g}$ of the $\Psi_{2k,-\ell}^{\z}(z)|_{2k,z}T_n$ analogous to the sum appearing in the formula \eqref{eqn:GZconjecture} conjectured by Gross and Zagier, but in higher weight. Since $\mathcal{G}_g\in \GZ_{2k,-\ell}^{\z}$ if and only if $g\in R_{2-2k}^{\ell-1}(M_{2-2k}^!)$, this formed the motivation for the definitions of the subspaces $\GZ_{2k,-\ell}^\z$ and $\GZp{2k}{-\ell}{\z}$.
\end{remark}

\section{Examples of meromorphic Hecke eigenforms}\label{sec:example}

In this section, we consider two examples of meromorphic Hecke eigenforms which are counterparts of holomorphic modular forms under the isomorphism in Theorem \ref{thmIso}. The first, $f_{6,i}$ as defined in \eqref{eqf6i}, has the same eigenvalues as the Eisenstein series $E_6$. The second example, $G$ defined in \eqref{eqn:defG}, has the same eigenvalues as the cusp form $\Delta$.

\subsection{A meromorphic Hecke eigenform associated to an Eisenstein series}

Here, we investigate $f_{6,i}$ given in the introduction and explain how it is defined. Noting that $S_6=\{0\}$ and $M_6=\C E_6$ \Cref{thmIso} and \eqref{eqn:iso} yields that for every $\ell\in\N$
\[
	\{0\}=S_{6}\cong \mathbb{S}_{6,-\ell}^{\z}\big\slash{\GZ}^\z_{6,-\ell} \cong H_{-4}^{\operatorname{cusp}}\big\slash M_{-4}^!,\qquad \C E_6= M_6\cong \mathbb{S}_{6,-\ell}^{\z}\big\slash \GZp{6}{-\ell}{\z} \cong H_{-4}^{\operatorname{cusp}}\big\slash S_{-4}^!. 
\]
Choosing the generator $\mathcal{F}_{-4,-1}$ of $H_{-4}^{\operatorname{cusp}}\big\slash S_{-4}^!$, by \eqref{eqn:varphiTm} we see that $\varrho_{6,\ell}^{\z}(\mathcal{F}_{-4,-1})$ is a Hecke eigenform in $\bbS_{6,-\ell}^{\z}/\GZp{6}{-\ell}{\z}$ with eigenvalues $n^{2k-1}\sigma_{5}(n)$ under $T_n$.
We choose $\ell=1$ and $\z=i$ and explicitly compute $\varrho_{6,1}^{i}(\mathcal{F}_{-4,-1})$. By \eqref{eqn:gsumPsirewrite} we have
\[
	\varrho_{6,1}^{i}\left(\mathcal{F}_{-4,-1}\right)=\mathcal{G}_{\mathcal{F}_{-4,-1}}(z,i)=-\frac{8}{\pi}\Psi_{6,-1}^i(z).
\]
Normalizing yields the eigenform
\[
	\frac{1}{2^7\pi \alpha} \Psi_{6,-1}^i(z)=\frac{\Delta(z)}{E_6(z)}=f_{6,i}(z).
\]
The calculation given in the introduction illustrates that $f_{6,i}$ is a meromorphic Hecke eigenform corresponding to (a constant multiple of) $E_6$ under the isomorphism in \Cref{thmIso}. Specifically, the eigenvalues under $T_m$ for $m\in\{5,7\}$ are directly computed and shown to match the eigenvalues $\s_5(m)$ of $E_6$ under $T_m$.

\subsection{A meromorphic Hecke eigenform associated to a cusp form}

In this example we establish a meromorphic counterpart of Ramanujan's $\De$-function under the isomorphism in Theorem \ref{thmIso}. Let $\z=\frac{1+\sqrt{7}i}{2}$ to be the unique CM point of discriminant $-7$ in the standard fundamental domain. Furthermore let $\Om:=\Om_{-7}$ the Chowla--Selberg period of discriminant $-7$ (for an explicit formula in terms of the $\Ga$-function see e.g. \cite[equation (97)]{Zagier123}). With this we can explicitly evaluate (see \cite[p. 87]{Zagier123}), 
\begin{equation}\label{eqn:evals}
	E_4(\z)=15\Omega^4,\quad E_6(\z)=27\sqrt{7}\Omega^6,\quad \Delta(\z)=-\Omega^{12}, \quad j(\z)=-15^3.
\end{equation}
Together with the valence formula this implies that the weight $12$ modular form 
\[
	F(z) := E_4(z)^3 + 15^3\De(z) = 1 + 4095q + 98280q^2 + 17805060q^3 + O\left(q^4\right) \in M_{12}
\]
has a simple zero in $\z$ and vanishes nowhere else in the fundamental domain. 

We claim that $G=\frac{\Delta^2}{F}$, defined in \eqref{eqn:defG}, is a meromorphic Hecke eigenform whose eigenvalues agree with those of $\De$. For this, we consider the Maass-Poincar\'e series $\F_{-10,-1}$ as a non-trivial representative of the space 
$H_{-10}^{\operatorname{cusp}}/M_{-10}^!\cong S_{12}=\C\De$, where we use the first isomorphism in Lemma \ref{lem:HeckeWeakly}. As in the previous example, the image of this Poincar\'e series under the map $\varrho_{12,1}^\z$ yields a constant multiple of the elliptic Poincar\'e series $\Psi_{12,-1}^\z$. By \cite[Theorem 1.1]{KenDelta}, for any prime $p$ 
$$
	\mathcal F_{-10,-1}|T_p-p^{-11}\tau(p)\mathcal F_{-10,-1}
$$
is a weakly holomorphic modular form.
Set
\[
	g(z) := \frac{E_4(z)^2E_6(z)}{\De(z)^2} = q^{-2} + 24q^{-1} - 196560 - 47709536q - 3688365156q^2 + O\left(q^3\right)\in M_{-10}^!.
\]
We find that 
\begin{align*}
	\mathcal F_{-10,-1}|T_2-2^{-11}\tau(2)\mathcal F_{-10,-1}&=2^{-11}g,\\
	 \mathcal F_{-10,-1}|T_3-3^{-11}\tau(3)\mathcal F_{-10,-1}&=3^{-11}(j-768)g.
\end{align*}
To see this we note that the principal parts are equal in in each case, wherefore the difference of both sides of the equality must be a holomorphic modular form of weight $-10$ and hence $0$.

We may use these to compute the first few Fourier coefficients of $\Psi_{12,-1}^\z$ in terms of $\a:=\F_{-10,-1}(\z)$ and $\b:=g(\z)=\frac{6075\sqrt7}{\Om^{10}}$, where we employ \eqref{eqn:evals} for the last equality.
Namely, by Lemma \ref{lemPsiFourier}, up to a multiplicative constant, we have
\begin{equation}\label{eqn:PsiFourier}
	\Psi_{12,-1}^\z(z) = \a q + (\b-24\a)q^2 + (252\a-4143\b)q^3 + O\left(q^4\right)
\end{equation}
for $y$ sufficiently large.

In order to show that $G$ is indeed a meromorphic Hecke eigenform corresponding to $\De$, we use \Cref{thmIso} and show that $G$ equals (up to a constant multiple of $\De$ itself) the elliptic Poincar\'e series $\Psi_{12,-1}^\z$. Since $\Psi_{12,-1}^\z$ and $G$ in \eqref{eqn:defG} both have weight $12$ and a simple pole in $\z$ and nowhere else in the fundamental domain, it follows that $\frac{\Psi_{12,-1}^\z}{G}$ is a modular function, holomorphic in $\H$ with at most a simple pole at $i\infty$. Comparing Fourier coefficients in \eqref{eqn:defG} and \eqref{eqn:PsiFourier} we find that
\[
	\frac{\Psi_{12,-1}^\z(z)}{G(z)} = \a q^{-1} + (4119\a+\b) + 196884\a q + O\left(q^2\right) = \a(j(z)+3375) + \b = \a\frac{F(z)}{\De(z)} + \b.
\]
It follows therefore that, again up to a constant factor,
$$
	\Psi_{12,-1}^{\z}=\beta G+\alpha\Delta.
$$
Since we can ignore the term $\alpha\Delta$, Theorem~\ref{thmIso} yields our claim that $G$ is a meromorphic Hecke eigenform corresponding to $\Delta$. To illustrate this we can compute
\begin{equation}\label{eqn:GHecke}
	G(z)|T_2 - (-24)G(z) = q + 16868409q^2 + 279687514914333q^3 + O\left(q^4\right),
\end{equation}
\begin{align}\nonumber
	2^{11}g(z)|T_2 &= q^{-4} + 24q^{-2} + 2048q^{-1} - 402751440 - 7553771839488q + O\left(q^2\right)\\
	\label{eqgT2}
	&= g(z)\left(j(z)^2-1512j(z)+374784\right)\\
	\label{eqgT3}
	3^{11}g(z)|T_3 &= q^{-6} + 24q^{-3} - 34820210880 - 27948629556463536q + O\left(q^2\right)\\
	\nonumber
	&\hspace{-.75cm}= g(z)\left(j(z)^4-3000j(z)^3+2784384j(z)^2-842201064j(z)+52796307708\right).
\end{align}
As predicted by \Cref{thmIso}, evaluating the polynomials in $j$ obtained in \eqref{eqgT2} and \eqref{eqgT3} in $\z$ (i.e., $j(\z)=-15^3$), yields exactly the coefficients of $q^2$ and $q^3$ in \eqref{eqn:GHecke}.


\begin{thebibliography}{99}

\bibitem{Bengoechea} P. Bengoechea, {\it Corps quadratiques et formes modulaires,} Ph.D. thesis, 2013.
\bibitem{BerndtBialekYee}B. Berndt, P. Bialek, and A. Yee, {\it Formulas of Ramanujan for the power series coefficients of certain quotients of Eisenstein series}, Int. Math. Res. Not. \textbf{2002} (2002), 1077--1109.
\bibitem{Bialek}P. Bialek, {\it Ramanujan's formulas for the coefficients in the power series expansions of certain modular forms}, Ph. D. thesis, University of Illinois at Urbana--Champaign, 1995.

\bibitem{MockBook}K. Bringmann, A. Folsom, K. Ono, and L. Rolen, {\it Harmonic Maass forms and mock modular forms: theory and applications}, Amer. Math. Soc. Colloq. Publ. \textbf{64}, Amer. Math. Soc., Providence, RI, 2017.

\bibitem{BGKO} K. Bringmann, P. Guerzhoy, Z. Kent, and Ken Ono, {\it Eichler--Shimura Theory for mock modular forms}, Math. Ann. \textbf{355} (2013), 1085--1121.


\bibitem{BKRamanujanAll} K. Bringmann and B. Kane, {\it Ramanujan-like formulas for Fourier coefficients of all meromorphic cusp forms}, Adv. Math. {\bf 373} (2020), 107308.


\bibitem{BF}J. Bruinier and J. Funke, {\it On two geometric theta lifts}, Duke Math. J. \textbf{125} (2004), 45--90.
\bibitem{BLY} J. Bruinier, Y. Li, and T. Yang, \begin{it}Deformations of theta integrals and a conjecture of Gross--Zagier\end{it} 
\bibitem{BOR}J. Bruinier, K. Ono, and R. Rhoades, {\it Differential operators for harmonic weak Maass forms and the vanishing of Hecke eigenvalues}, Math. Ann. \textbf{342} (2008), 673--693.

\bibitem{Fay}J. Fay, {\it Fourier coefficients of the resolvent kernel for a Fuchsian group}, J. reine angew. Math. \textbf{293--294} (1977), 143--203.
\bibitem{GrossZagier}B. Gross and D. Zagier, {\it Heegner points and derivatives of $L$-series}, Invent. Math. \textbf{84} (1986), 225--320.

\bibitem{Guerzhoy}P. Guerzhoy, {\it Hecke operators for weakly holomorphic modular forms and supersingular congruences}, Proc. Amer. Math. Soc. \textbf{136} (2008), 3051--3059.

\bibitem{GKO}P. Guerzhoy, Z. Kent, and K. Ono, {\it $p$-adic coupling of mock modular forms and shadows}, Proc. Natl. Acad. Sci. USA \textbf{107} (2010), 6169--6174.
\bibitem{Hejhal}D. Hejhal, {\it The Selberg trace formula for $\PSL_2(\R)$}, Lect. Notes Math. \textbf{1001}, Springer, Berlin, 1983/
\bibitem{ImamogluOSullivan} \"O. Imamo$\overline{\text{g}}$lu and C. O'Sullivan, \begin{it}Parabolic, hyperbolic and elliptic Poincar\'e series\end{it}, Acta. Arith. \textbf{139} (2009), 199--228.

\bibitem{Li} Y. Li, \emph{Algebraicity of higher Green functions at a CM point}, preprint \url{https://arxiv.org/abs/2106.13653}, 2021.


\bibitem{NIST} NIST Digital Library of Mathematal functions, \url{http://dlmf.nist.gov/}, Release 1.1.3 of 2021-09-15. F. Olver, A. Olde Daalhuis, D. Lozier, B. Schneider, R. Boisvert, C. Clark, B. Miller, B. Saunders, H. Cohl, and M. McClain, eds.

\bibitem{KenDelta} K. Ono, \textit{A mock theta function for the Delta-function}, Combinatorial number theory: Proceedings of the 2007 Integers Conference, de Gruyter, Berlin, 2009, pages 141--156.

\bibitem{OnoBook}K. Ono, {\it The web of modularity: arithmetic of the coefficients of modular forms and $q$-series}, CBMS \textbf{102}, Amer. Math. Soc., 2004.

\bibitem{Pe1} H. Petersson, {\it Konstruktion der Modulformen und der zu gewissen Grenzkreisgruppen geh\"origen automorphen Formen von positiver reeller Dimension und die vollst\"andige Bestimmung ihrer Fourierkoeffizienten}, S.-B. Heidelberger Akad. Wiss. Math. Nat. Kl. (1950), 415--474.
\bibitem{Pe2}H. Petersson, {\it \"Uber automorphe Orthogonalfunktionen und die Konstruktion der automorphen Formen von positiver reeller Dimension}, Math. Ann. \textbf{127} (1954), 33--81.


\bibitem{Zagier123} D. Zagier, {\it Elliptic modular forms and their applications}, in ``The 1-2-3 of modular forms'', Springer, 2008.
\end{thebibliography}
\end{document}